\newtheorem{theorem}{Theorem}[section]
\newtheorem*{maintheorem}{Main Theorem}
\newtheorem{lemma}[theorem]{Lemma}
\newtheorem{corollary}[theorem]{Corollary}
\newtheorem{question}[theorem]{Question}
\theoremstyle{definition}
\theoremstyle{remark}
\newcommand{\R}{\mathbb{R}}
\newcommand{\N}{\mathbb{N}}
\newcommand{\A}{\mathcal{A}}
\newcommand{\B}{\mathcal{B}}
\newcommand{\C}{\mathcal{C}}
\newcommand{\F}{\mathcal{F}}
\newcommand{\G}{\mathcal{G}}
\newcommand{\K}{\mathcal{K}}
\renewcommand{\P}{\mathcal{P}}
\newcommand{\explicitSet}[1]{\left\lbrace #1 \right\rbrace}
\newcommand{\set}[2]{\explicitSet{#1 \colon #2}}
\renewcommand{\a}{\alpha}
\renewcommand{\b}{\beta}
\newcommand{\dlt}{\delta}
\renewcommand{\k}{\kappa}
\newcommand{\s}{\sigma}
\newcommand{\w}{\omega}
\newcommand{\0}{\emptyset}
\newcommand{\sub}{\subseteq}
\newcommand{\rest}{\!\restriction\!}
\newcommand{\homeo}{\approx}
\newcommand{\cf}{\mathrm{cf}}
\newcommand{\card}[1]{\left\lvert #1 \right\rvert}
\newcommand{\continuum}{\mathfrak{c}}
\newcommand{\dom}{\mathfrak d}
\newcommand{\mad}{\mathfrak{a}}
\newcommand{\gch}{\ensuremath{\mathsf{GCH}}\xspace}
\newcommand{\zfc}{\ensuremath{\mathsf{ZFC}}\xspace}
\newcommand{\paar}{\mathfrak{par}}
\newcommand{\bspec}{\mathfrak{sp}(\text{\small Borel})}
\newcommand{\Spec}{\mathfrak{sp}(\text{\small closed})}
\newcommand{\specod}{\mathfrak{sp}(\text{\footnotesize OD($\R$)})}
\newcommand{\specg}{\mathfrak{sp}({\text{\footnotesize $\Gamma$}})}
\newcommand{\specgd}{\mathfrak{sp}({\text{\footnotesize $G_\dlt$}})}
\begin{document}

\title[The Borel partition spectrum at successors of singulars]{The Borel partition spectrum \linebreak at successors of singular cardinals}
\author{Will Brian}
\address {
W. R. Brian\\
Department of Mathematics and Statistics\\
University of North Carolina at Charlotte\\
9201 University City Blvd.\\
Charlotte, NC 28223, USA}
\email{wbrian.math@gmail.com}
\urladdr{wrbrian.wordpress.com}

\subjclass[2010]{03E17, 03E35, 54A35}
\keywords{cofinal Kurepa families, singular cardinals, successors of singular cardinals, Borel sets, partitions}

\thanks{The author is supported by NSF grant DMS-2154229.}

\begin{abstract}
Assuming that $0^\dagger$ does not exist, we prove that if there is a partition of $\mathbb R$ into $\aleph_\omega$ Borel sets, then there is also a partition of $\mathbb R$ into $\aleph_{\omega+1}$ Borel sets.
\end{abstract}

\maketitle


\section{Introduction}

Define the \emph{Borel partition spectrum}, denoted $\bspec$, as follows:
$$\bspec = \set{|\P|}{\P \text{ is a partition of } \R \text{ into uncountably many Borel sets} }\!.$$
Let us begin by reviewing what is known about $\bspec$.

\vspace{.3mm}

\begin{enumerate}
\item $\aleph_1 \in \bspec$. 

\vspace{1mm}

\noindent Hausdorff showed in \cite{Hausdorff} that $2^\w$, and in fact any uncountable Polish space, can be expressed as an increasing union $\bigcup_{\xi < \w_1}E_\xi$ of $G_\dlt$ sets. 
This implies that $\R$, or any other uncountable Polish space, can be partitioned into $\aleph_1$ nonempty $F_{\s \dlt}$ sets.

\vspace{2mm}

\item $\continuum = \max\! \big( \bspec \big)$.

\vspace{1mm}

\noindent Clearly $\continuum \in \bspec$, because we may partition $\R$ into singletons. Assuming the Axiom of Choice (which we do throughout), $\R$ cannot be partitioned into $>\!|\R|$ sets.

\vspace{2mm}

\item $\bspec$ is closed under singular limits.

\vspace{1mm}

\noindent In other words, if $\lambda$ is singular and $\bspec \cap \lambda$ is unbounded in $\lambda$, then $\lambda \in \bspec$. This is proved in \cite[Theorem 2.8]{BBP} by adapting a (fairly straightforward) argument of Hechler \cite[Theorem 3.4]{Hechler} proving the analogous statement for the so-called {\small MAD} spectrum.
\end{enumerate}

\vspace{.3mm}

In addition to these positive \zfc-provable facts concerning $\bspec$, we also have several independence results, which seem to suggest that very little else can be proved from \zfc concerning $\bspec$.

\vspace{.3mm}

\begin{enumerate}
\item[$(4)$] It is consistent with arbitrary values of $\continuum$ that $\bspec = \{\aleph_1,\continuum\}$.

\vspace{1mm}

\noindent More precisely, if $\k^{\aleph_0} = \k$ and \gch holds up to $\k$, then forcing with $\mathrm{Fn}(\k,2)$ to add $\k$ mutually generic Cohen reals produces a model in which $\k = \continuum$ and $\bspec = \{\aleph_1,\k\}$. 
This is implicit in an argument of Miller \cite[Section 3]{Miller89}; a more general theorem from which this follows is proved explicitly by Blass in \cite{Blass}.

\vspace{2mm}

\item[$(5)$] It is consistent with arbitrary values of $\continuum$ that $\bspec = [\aleph_1,\continuum]$.

\vspace{1mm}

\noindent More precisely, given any $\k \geq \continuum$ with $\k^{\aleph_0} = \k$, there is a ccc forcing extension in which $\continuum = \k$ and $\bspec = [\aleph_1,\continuum]$. This follows from \cite[Theorem 3.11]{Brian&Miller}, where the result is proved for $2^\w$ instead of $\R$.
\end{enumerate}

\vspace{.3mm}

\noindent In other words, $(4)$ and $(5)$ say that, regardless of what $\continuum$ may be, $\bspec$ can be as small as possible or as large as possible (given the requirements in $(1)$ and $(2)$). Furthermore, the next result shows that $\bspec$ can be made to look chaotic and patternless, equal to an almost (but not quite) arbitrary set of cardinals satisfying $(1)$, $(2)$, and $(3)$.

\vspace{.3mm}

\begin{enumerate}
\item[$(6)$] Suppose $C$ is a set of uncountable cardinals such that:
\begin{enumerate}
\item $C$ is countable,
\item $\aleph_1 \in C$,
\item $C$ has a maximum, and $\max (C)^{\aleph_0} = \max (C)$,
\item $C$ is closed under singular limits, and
\item if $\lambda \in C$ and $\cf(\lambda) = \w$, then $\lambda^+ \in C$.
\end{enumerate}
\noindent Assuming \gch holds up to $\max C$, there is a ccc forcing extension in which $C = \bspec$.

\vspace{1.3mm}

\noindent This is proved as Corollary 3.3 in \cite{BBP}. 
An immediate consequence of this (Corollary 3.4 in \cite{BBP}) is: for any $A \sub \w \setminus \{0\}$ with $1 \in A$, there is a forcing extension in which $\bspec = \set{\aleph_n}{n \in A} \cup \{\aleph_\w,\aleph_{\w+1}\}$.
\end{enumerate}

\vspace{.3mm}

Of the requirements given for $C$ in $(6)$, items (b), (c), and (d) correspond directly to items $(1)$, $(2)$, and $(3)$ above. These requirements for $C$ cannot be eliminated from $(6)$, because they are necessary features of $\bspec$ in any forcing extension. 
Item (a), on the other hand, does not represent a necessary feature of $\bspec$; we know this because $(5)$ implies that $\bspec$ can be uncountable. Item (a) is just an artifact of the proof of $(6)$ in \cite{BBP}, and it is conceivable that a more careful or more clever proof could, at some point in the future, eliminate it from the statement of $(6)$ altogether.

This leaves item (e), which forms the topic of this paper. 
Our main result is that, in a sufficiently ``$L$-like'' set-theoretic universe, (e) represents a necessary feature of $\bspec$. 
Thus, in such a universe there is nontrivial structure to $\bspec$ beyond what is stated in items $(1)$, $(2)$, and $(3)$ above.

\begin{maintheorem}
Suppose $0^\dagger$ does not exist. If $\k$ is a singular cardinal with $\cf(\k) = \w$ and $\k \in \mathfrak{sp}(\text{\emph{\small Borel}})$, then $\k^+ \in \mathfrak{sp}(\text{\emph{\small Borel}})$.
\end{maintheorem}

\noindent In particular, assuming that $0^\dagger$ does not exist, if there is a partition of $\mathbb R$ into $\aleph_\omega$ Borel sets then there is also a partition of $\mathbb R$ into $\aleph_{\omega+1}$ Borel sets.

The assertion ``$0^\dagger$ exists'' is a large cardinal axiom a little stronger (in consistency strength) than the existence of a measurable cardinal. 
Roughly, $0^\dagger$ bears the same relationship to the $L$-like models of the form $L[\mu]$, where $\mu$ is a normal measure on a measurable cardinal, that $0^\sharp$ bears to $L$.
(See \cite{Kanamori} for more.)
In particular, if $0^\dagger$ exists then there is an inner model containing a measurable cardinal. 
Thus, by contraposition: if $\aleph_\w \in \bspec$ while $\aleph_{\w+1} \notin \bspec$, there is an inner model containing a measurable cardinal.

We present two proofs of the main theorem. Both of them use the assertion ``$0^\dagger$ does not exist'' in the guise of cofinal Kurepa families. What these are, and their connection to $0^\dagger$, is explained in Section~\ref{sec:Kurepa}.
Then in Sections~\ref{sec:main} and \ref{sec:encore} we present our two proofs.
These proofs use cofinal Kurepa families in different but clearly analogous ways.
The proof in Section~\ref{sec:main} is more direct, optimized for both length and clarity. However, the proof in Section~\ref{sec:encore} gives slightly more in the end, and it also connects the topic of this paper to an old Scottish Book problem of Stefan Banach.
\section{Preliminaries: cofinal Kurepa families}\label{sec:Kurepa}

Given an infinite set $A$, recall that $[A]^\w$ denotes the set of all countably infinite subsets of $A$.

\begin{itemize}
\item[$\circ$] $\F \sub [A]^\w$ is \emph{cofinal} in $[A]^\w$ if for every $X \in [A]^\w$ there is some $Y \in \F$ such that $Y \supseteq X$. In other words, $\F \sub [A]^\w$ is cofinal if it is cofinal (in the usual sense of the word) in the poset $\hspace{-.1mm} \big( [A]^{\mu},\sub \hspace{-1mm} \big)$.
\end{itemize}
\begin{itemize}
\item[$\circ$] $\F \sub [A]^\w$ is \emph{Kurepa} if $\set{X \cap Y}{Y \in \F}$ is countable for every countable set $X$.
\end{itemize}

For example, $\F = [A]^\w$ is cofinal in $[A]^\w$ but not Kurepa, for any infinite set $A$. If $A$ is uncountable, any countable $\F \sub [A]^\w$ is Kurepa but not cofinal.
Generally, if $\F$ is cofinal then so is every $\G \supseteq \F$, and if $\F$ is Kurepa then so is every $\G \sub \F$. 
Thus cofinal families are ``large'' and Kurepa families are ``small'' but in different senses. A subset of $[A]^\w$ is a \emph{cofinal Kurepa family} in $[A]^\w$ if it is both cofinal in $[A]^\w$ and Kurepa.

For example, $\set{\a}{\w \leq \a < \w_1}$ is a cofinal Kurepa family in $[\w_1]^\w$. (Here, as usual, we adopt the convention that an ordinal is equal to the set of its predecessors.)
Note that the (non-)existence of cofinal Kurepa families in $[A]^\w$ depends only on the cardinality of $A$, so in fact this shows there is a cofinal Kurepa family in $[A]^\w$ for every set $A$ of cardinality $\aleph_1$.
It turns out that the same is true for every cardinal below $\aleph_\w$:

\begin{theorem}\label{thm:smallKFams}
If $|A| < \aleph_\w$ then there is a cofinal Kurepa family in $[A]^\w$. 
\end{theorem}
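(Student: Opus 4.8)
The plan is to prove the statement by induction on $n$, where $|A|=\aleph_n$; since the excerpt notes that the (non)existence of a cofinal Kurepa family in $[A]^\w$ depends only on $|A|$, one may assume $A=\w_n$. The case $n=0$ is trivial (take $\F=\{\w_0\}$) and $n=1$ is the family $\set{\a}{\w\le\a<\w_1}$ recalled above. So suppose a cofinal Kurepa family exists in $[B]^\w$ for every $B$ with $|B|\le\aleph_n$, and fix an increasing continuous sequence $\seq{\delta_\xi}{\xi<\w_{n+1}}$ cofinal in $\w_{n+1}$ with $|\delta_\xi|\le\aleph_n$ and $|\delta_{\xi+1}\setminus\delta_\xi|=\aleph_n$ (say $\delta_\xi=\w_n\cdot\xi$).

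The heart of the argument is to construct, by recursion on $\xi$, a sequence $\seq{\G_\xi}{\xi<\w_{n+1}}$ in which each $\G_\xi$ is a cofinal Kurepa family in $[\delta_\xi]^\w$ and the sequence is \emph{coherent}: $\G_\z\sub\G_\xi$ and $\set{W\cap\delta_\z}{W\in\G_\xi}=\G_\z$ whenever $\z<\xi$. Given such a sequence, I would set $\F=\bigcup_{\xi<\w_{n+1}}\G_\xi$. Since $\cf(\w_{n+1})>\w$, every countable $X\sub\w_{n+1}$ lies below some $\delta_\xi$ and hence inside a member of $\G_\xi\sub\F$, so $\F$ is cofinal; and for the Kurepa property one fixes a countable $X$, notes that $X\cap\delta_\xi$ takes only countably many values as $\xi$ varies, and uses coherence to see that $\set{W\cap X}{W\in\G_\xi}$ depends only on the value of $X\cap\delta_\xi$, so that $\set{W\cap X}{W\in\F}$ is a countable union of countable sets.

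Building the coherent sequence is routine at most stages. At a successor $\xi=\z+1$ (with $\G_\z$ already cofinal), use the inductive hypothesis to fix a cofinal Kurepa family $\K$ in $[\delta_{\z+1}\setminus\delta_\z]^\w$ and put $\G_{\z+1}=\G_\z\cup\set{Z\cup K}{Z\in\G_\z,\ K\in\K}$; splitting each countable set along $\delta_\z$ shows this is again a cofinal Kurepa family and that coherence with all earlier $\G_\z$ survives. At a limit $\eta$ with $\cf(\eta)>\w$, every countable subset of $\delta_\eta$ is bounded below $\delta_\eta$, so $\bigcup_{\xi<\eta}\G_\xi$ is already cofinal (as well as coherent and Kurepa), and one takes it for $\G_\eta$.

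The one genuinely delicate case, which I expect to be the main obstacle, is a limit $\eta$ with $\cf(\eta)=\w$: here $\bigcup_{\xi<\eta}\G_\xi$ is a coherent Kurepa family in $[\delta_\eta]^\w$ but is no longer cofinal, since a countable $X$ that is cofinal in $\delta_\eta$ lies in no $\delta_\xi$. A set $W$ eligible for $\G_\eta$ must have $W\cap\delta_\xi\in\G_\xi$ for all $\xi<\eta$, i.e.\ $W$ must be the union of a $\sub$-increasing ``tower'' $Z_0\sub Z_1\sub\cdots$ with $Z_k\in\G_{\xi_k}$ and $Z_{k+1}\cap\delta_{\xi_k}=Z_k$ along a sequence $\xi_k\nearrow\eta$; adding \emph{all} such towers restores cofinality but ruins the Kurepa property, while adding too few leaves countable sets uncovered. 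The resolution I would pursue is to strengthen the inductive hypothesis to the effect that any Kurepa family on a set of size $\le\aleph_n$ that is coherent along a filtration extends to a cofinal Kurepa family coherent along the same filtration, and to apply this to $\bigcup_{\xi<\eta}\G_\xi$ on $[\delta_\eta]^\w$; making this extension lemma work — arranging, via an auxiliary ``fiber-cofinality'' bookkeeping on the $\G_\xi$, that the tower chosen through a given $X$ is canonical enough that the newly added sets still have countable traces on every countable set — is where the real effort goes, and it is precisely here that the finiteness of $n$ (so that $|\delta_\eta|$ is a strictly smaller cardinal) is used.
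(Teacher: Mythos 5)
There is a genuine gap, and you have located it yourself: the limit stage of countable cofinality is not actually carried out. Your outer architecture is sound --- the reduction to building a coherent sequence $\seq{\G_\xi}{\xi<\w_{n+1}}$, the verification that the union $\F$ of such a sequence is cofinal (using $\cf(\w_{n+1})>\w$) and Kurepa (using coherence plus the fact that a countable $X$ is bounded), and the successor and uncountable-cofinality limit steps all check out. But the step you flag as ``where the real effort goes'' is not a technical loose end; it is essentially the entire content of the theorem. The statement ``any coherent Kurepa family along a filtration of a set of size $\le\aleph_n$ extends to a \emph{cofinal} coherent Kurepa family along the same filtration'' is not proved, is not an instance of your stated inductive hypothesis, and is not obviously true even granting that hypothesis: the plain existence of \emph{some} cofinal Kurepa family on $[\delta_\eta]^\w$ (which induction does give you, since $|\delta_\eta|=\aleph_n$) says nothing about whether one exists that traces correctly onto every $\delta_\xi$ for $\xi<\eta$. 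Indeed, the reason the theorem stops at $\aleph_\w$ --- and the reason Theorem~\ref{thm:KurepaExistence} needs $\square_\k$ and Theorem~\ref{thm:KurepaNonExistence} shows consistency of failure at $\aleph_\w$ --- is precisely that sealing cofinality at $\w$-cofinal limits while preserving coherence and the Kurepa property requires a square-like coherence mechanism. A phrase like ``the resolution I would pursue is to strengthen the inductive hypothesis'' followed by ``making this extension lemma work \dots is where the real effort goes'' is a research plan, not a proof.

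For comparison: the paper does not prove this statement at all; it cites Todor\v{c}evi\'c \cite{Todorcevic}, Corollary 7.6.22, where the construction below $\aleph_\w$ is carried out using the coherent-sequence machinery of that book (in effect, coherent sequences of maps $e_\alpha:\alpha\to\w_n$ whose coherence properties are exactly what your sketch is missing at $\w$-cofinal limits). So your instinct about the shape of the argument is right, and your identification of the obstacle is accurate, but the proposal as written does not close it. To complete it you would need to formulate and prove the extension lemma precisely --- for instance by carrying along, as part of the induction, a coherent system of injections of each $\delta_\xi$ into $\w_n$ and defining $\G_\xi$ as preimages of members of a fixed cofinal Kurepa family on $[\w_n]^\w$, so that the $\w$-cofinal limit stage becomes a statement about coherence of the injections rather than of the families themselves.
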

\begin{proof}
See \cite[Corollary 7.6.22]{Todorcevic}.
\end{proof}

The situation is more subtle for cardinals $\geq \aleph_\w$, as described in the following two theorems.

\begin{theorem}\label{thm:KurepaExistence}
Suppose $\square_\k$ holds for every uncountable cardinal $\k$ with $\cf(\k) = \w$. Then there is a cofinal Kurepa family  in $[A]^\w$ for every set $A$.
\end{theorem}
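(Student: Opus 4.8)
The plan is to build a cofinal Kurepa family in $[\k]^\w$ for an arbitrary singular cardinal $\k$ of cofinality $\w$ (the statement for general $A$ then follows, since existence depends only on $|A|$; and we may restrict to singular $\k$ of cofinality $\w$ because Theorem~\ref{thm:smallKFams} handles everything below $\aleph_\w$, and the successor-of-singular and regular-limit cases should reduce to the singular cofinality-$\w$ case via a standard covering/decomposition argument — actually one wants to induct on $\k$, handling successors of singulars by a covering-family trick and regular cardinals by a $\square$-based tree argument as in the classical construction of Kurepa families on $\aleph_2$). So the main case is: $\k$ singular, $\cf(\k)=\w$, and we have cofinal Kurepa families on every cardinal $<\k$ by induction.

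First I would fix an increasing sequence $\seq{\k_n}{n\in\w}$ of regular cardinals cofinal in $\k$ with $\k_0 > \w$, together with a $\square_\k$-sequence $\seq{C_\a}{\a \in \mathrm{Lim},\ \cf(\a)<\k,\ \a<\k^+}$ — wait, I only need $\square_\k$ itself here, giving a coherent sequence of clubs on the ordinals below $\k^+$ of cofinality $<\k$ whose order types are $\le\k$; the point of $\square_\k$ is to get, for each limit $\a<\k$, a canonical cofinal subset witnessing how $\a$ is approached. Then for each $n$, let $\F_n$ be a cofinal Kurepa family in $[\k_n]^\w$ (induction hypothesis). The candidate family $\F$ on $[\k]^\w$ is built by taking a countable $X\sub\k$, writing $X = \bigcup_n (X\cap\k_n)$, and "closing off": using the $\square$-sequence to replace each $X\cap[\k_{n},\k_{n+1})$ by a member of (a copy of) $\F_{n+1}$ that contains it, while simultaneously using the $C_\a$'s to ensure the pieces cohere across levels so that the whole family stays Kurepa. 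Concretely, a generic element of $\F$ should look like $\bigcup_n Y_n$ where $Y_n \in \F_n$ and the $Y_n$ are linked through the square sequence; cofinality is then immediate (given $X$, build the $Y_n \supseteq X\cap\k_n$ level by level), and the Kurepa property is where $\square_\k$ earns its keep.

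The hard part will be verifying the Kurepa property: given a countable $X\sub\k$, I must show $\set{X\cap Z}{Z\in\F}$ is countable. The issue is that $X$ may be cofinal in $\k$, so it meets every level $[\k_n,\k_{n+1})$, and a priori the number of ways $Z$ can intersect $X$ could blow up across the $\w$ levels — a product of countably many countable sets is uncountable. The resolution must use coherence: the $\square_\k$-sequence forces the "tail behavior" of any $Z\in\F$ above $\sup(X\cap\k_n)$, for large enough $n$, to be determined by finitely much data, so that $X\cap Z$ is eventually constant in $n$ up to finitely many choices. This is exactly analogous to how one proves a single $\square_\k$-sequence yields a special Aronszajn-type or Kurepa-type object at $\k^+$: coherence collapses an apparent $\w$-fold product into something countable. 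I expect the bookkeeping to require defining $\F$ not just from the $\F_n$'s but from $\F_n$'s indexed by the finitely-many relevant ordinals in the $\square$-sequence below a given point, and then arguing that for fixed $X$ only countably many such "configurations" arise.

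Finally I would assemble the induction: the base case $|A|\le\aleph_1$ is the explicit example $\set{\a}{\w\le\a<\w_1}$ already noted; the successor step $\k^+$ from $\k$ uses a cofinal family of sets of size $\k$ covering $[\k^+]^{\le\k}$ (available under $\square_\k$, or more elementarily) to reduce computing intersections in $[\k^+]^\w$ to computing them inside size-$\k$ pieces, where the inductive family applies; the regular-limit step is subsumed once we have all successor cardinals, or handled directly. Two remarks on scope: the hypotheses of the theorem are stated for \emph{every} uncountable $\k$ of cofinality $\w$, which is what drives the induction through all limit cardinals; and although the theorem as stated only invokes $\square_\k$ at singulars of cofinality $\w$, the construction at regular $\k$ will need the square sequences at the singulars below it, which is consistent with the hypothesis. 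The real content, and the step I'd budget the most time for, remains the coherence argument for the Kurepa property at a singular of cofinality $\w$.
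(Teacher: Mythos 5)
The first thing to note is that the paper does not actually prove this theorem: its ``proof'' is a citation to Todor\v{c}evi\'c \cite{Todorcevic}, Theorem 7.6.26. So your proposal must be judged as a free-standing argument, and as such it is an outline rather than a proof. The one step that carries all the content --- verifying the Kurepa property of the glued family --- is explicitly deferred (``the Kurepa property is where $\square_\k$ earns its keep,'' ``I expect the bookkeeping to require\dots,'' ``the step I'd budget the most time for''). You correctly identify the obstruction: a countable $X$ cofinal in $\k$ meets every block $[\k_n,\k_{n+1})$, each block contributes countably many possible traces, and an $\w$-fold product of countable sets has size continuum. But you never define the family $\F$, never specify how the $Y_n$'s are ``linked through the square sequence,'' and never show how coherence collapses that product to something countable. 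That verification \emph{is} the theorem, and your instinct that it is analogous to special-Aronszajn-type constructions, while in the right spirit of Todor\v{c}evi\'c's actual argument (which runs an induction on cardinals using coherent sequences of mappings), does not substitute for carrying it out.

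There are also two concrete missteps. First, $\square_\k$ is a principle about the ordinals below $\k^+$; you state this correctly and then immediately use it as though it supplied, ``for each limit $\a<\k$, a canonical cofinal subset witnessing how $\a$ is approached'' --- that is a C-sequence on $\k$, not a $\square_\k$-sequence. Since any cofinal Kurepa family in $[\k]^\w$ for $\cf(\k)=\w$ has cardinality $\cf[\k]^\w\geq\k^+$ (Lemma~\ref{lem:Ksize} together with the diagonalization remark in Section~\ref{sec:Kurepa}), the natural role of $\square_\k$ is to organize the $\k^+$-indexed enumeration of the family coherently; your sketch never engages with this. Second, the successor step is not ``more elementary'': if $\F_\a$ is a cofinal Kurepa family on $[\a]^\w$ for each $\a<\k^+$ and $\F=\bigcup_{\a<\k^+}\F_\a$, then a fixed countable $X$ has a countable trace-set $\set{X\cap Y}{Y\in\F_\a}$ against each individual $\F_\a$, but there are $\k^+$ many values of $\a$, so without coherence among the $\F_\a$'s the union of these trace-sets can have size $\k^+$. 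This is exactly the same coherence problem as at the limit step; it is already present in the proof of Theorem~\ref{thm:smallKFams}, which you invoke as a black box, and it is why the cited construction is built on coherent sequences rather than on an unstructured union of families.
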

\begin{proof}
See \cite[Theorem 7.6.26]{Todorcevic} and the comments following its proof.
\end{proof}

\begin{theorem}\label{thm:KurepaNonExistence}
It is consistent relative to a huge cardinal that there is no cofinal Kurepa family in $[\w_\w]^\w$.
\end{theorem}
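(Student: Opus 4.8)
The plan is to pass to a model of a two-cardinal reflection principle at $\aleph_\omega$ — a Chang's-conjecture variant — and then to show, in \zfc, that this principle cannot coexist with a cofinal Kurepa family in $[\aleph_\omega]^\omega$. The principle in question is $(\aleph_{\omega+1},\aleph_\omega) \twoheadrightarrow (\aleph_1,\aleph_0)$: for every structure $\mathfrak A$ with universe $\aleph_{\omega+1}$ in a countable language, one of whose predicates names $\aleph_\omega$, there is an elementary substructure $N \prec \mathfrak A$ with $|N| = \aleph_1$ and $|N \cap \aleph_\omega| = \aleph_0$. Levinski, Magidor, and Shelah proved that this principle is consistent relative to a huge cardinal, and I would simply take one of their models as the ground model for the theorem; no additional forcing is required.

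So the work to be done is the \zfc implication: $(\aleph_{\omega+1},\aleph_\omega) \twoheadrightarrow (\aleph_1,\aleph_0)$ implies that there is no Kurepa family $\F \sub [\aleph_\omega]^\omega$ with $|\F| \geq \aleph_{\omega+1}$. Suppose $\F$ were such a family. Since every subfamily of a Kurepa family is Kurepa, we may assume $\F = \set{Y_\alpha}{\alpha < \aleph_{\omega+1}}$ with the $Y_\alpha$ pairwise distinct. Let $\mathfrak A = (\aleph_{\omega+1}; <, P, E)$, where $<$ is the ordinal order, $P$ is a unary predicate naming $\aleph_\omega$, and $E = \set{(\alpha,\beta)}{\beta \in Y_\alpha}$. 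Applying the partition relation to $\mathfrak A$ yields $N \prec \mathfrak A$ with $|N| = \aleph_1$ and $X := N \cap \aleph_\omega$ countable. For each $\alpha \in N$ and each $n \in \omega$, the $n$-th element of $Y_\alpha$ in the order $<$ is definable in $\mathfrak A$ from $\alpha$ and $n$, and therefore belongs to $N$; as $Y_\alpha$ is countably infinite, this gives $Y_\alpha \sub N \cap \aleph_\omega = X$, so $Y_\alpha = X \cap Y_\alpha$. Hence $\set{Y_\alpha}{\alpha \in N}$ is a subset of $\set{X \cap Y}{Y \in \F}$; the former has size $\aleph_1$ (the $Y_\alpha$ are distinct and $|N| = \aleph_1$), while the latter is countable because $\F$ is Kurepa. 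This contradiction proves the implication — and in fact it rules out \emph{all} Kurepa families of size $\geq \aleph_{\omega+1}$, which is more than the theorem requires.

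To conclude, I would use the \zfc fact that every cofinal family in $[\aleph_\omega]^\omega$ has size at least $\aleph_{\omega+1}$, so that no cofinal family can be Kurepa. Briefly: given any family in $[\aleph_\omega]^\omega$ of size at most $\aleph_\omega$, write it as an increasing union $\bigcup_{n \in \omega}\F_n$ with $|\F_n| \leq \aleph_n$, pick $x_n \in (\aleph_{n+1}\setminus\aleph_n) \setminus \bigcup\F_n$ — possible because $\bigcup\F_n$ meets the interval $\aleph_{n+1}\setminus\aleph_n$ in a set of size at most $\aleph_n < \aleph_{n+1}$ — and observe that $\set{x_n}{n \in \omega} \in [\aleph_\omega]^\omega$ is not contained in any member of the family, so the family is not cofinal. (See also \cite{Todorcevic}.) Putting the pieces together: in a Levinski–Magidor–Shelah model, any cofinal family in $[\w_\w]^\w$ has size at least $\aleph_{\omega+1}$ and hence is not Kurepa, so there is no cofinal Kurepa family there.

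The one genuinely hard ingredient is the Levinski–Magidor–Shelah consistency result itself — forcing $(\aleph_{\omega+1},\aleph_\omega) \twoheadrightarrow (\aleph_1,\aleph_0)$ from a huge embedding while preserving the cardinals up through $\aleph_{\omega+1}$ — and this is where the large-cardinal hypothesis is spent. The rest is light; the only subtleties are bookkeeping: presenting $\mathfrak A$ in a countable language with $\aleph_\omega$ as a predicate so that the partition relation applies verbatim, and including enough structure on $\mathfrak A$ (here, just the order $<$) that each element of each $Y_\alpha$ is definable from $\alpha$, which is precisely what forces $Y_\alpha \sub N$ once $\alpha \in N$.
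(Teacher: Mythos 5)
Your overall route is the same as the paper's: pass to a model of the Chang-type transfer principle at $\aleph_\omega$ (the paper cites Levinski--Magidor--Shelah together with the Eskew--Hayut improvement; note that LMS themselves needed slightly more than a huge cardinal, so the ``from a huge cardinal'' attribution belongs to Eskew--Hayut), and then argue in \zfc that this principle is incompatible with a cofinal Kurepa family in $[\w_\w]^\w$. The paper leaves that second step as a citation, whereas you supply it; your closing diagonalization showing every cofinal family in $[\w_\w]^\w$ has size at least $\aleph_{\w+1}$ is correct, and reflecting a structure that codes $\F$ is the standard way to prove the implication. (Incidentally, the form you use, $(\aleph_{\w+1},\aleph_\w)\twoheadrightarrow(\aleph_1,\aleph_0)$, is the one actually established in the cited sources.)

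There is, however, one step that fails as written. You claim that for $\alpha\in N$ the ``$n$-th element of $Y_\alpha$ in the order $<$'' is definable from $\alpha$ and $n$, and that since $Y_\alpha$ is countably infinite this yields $Y_\alpha\sub N$. But $Y_\alpha$ is an arbitrary countably infinite set of ordinals, so its order type under $<$ can be any countably infinite ordinal; your induction only reaches the elements in positions $<\w$, so what you actually obtain is that $N$ contains the first $\w$ elements of $Y_\alpha$, i.e.\ that $X\cap Y_\alpha$ is infinite --- not that $X\cap Y_\alpha=Y_\alpha$. That is not enough for the counting argument, since distinct $Y_\alpha$'s could a priori leave the same trace on $X$. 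The fix is cheap: enrich the structure with a binary function $f$ such that $f(\alpha,\cdot)$ is a surjection from $\w$ onto $Y_\alpha$ (any enumeration, not the increasing one). Every natural number is definable in $(\aleph_{\w+1},<)$, so $\w\sub N$, and then elementarity gives $Y_\alpha=\set{f(\alpha,n)}{n\in\w}\sub N\cap\aleph_\w=X$ for every $\alpha\in N$, after which your contradiction with the Kurepa property goes through verbatim. Since the transfer principle applies to arbitrary structures in a countable language, adding $f$ costs nothing.
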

\begin{proof}
The generalized Chang Conjecture $(\aleph_{\w+1},\aleph_\w) \to\hspace{-3.14mm}\to (\aleph_1,\aleph_\w)$ implies there is no cofinal Kurepa family on $\w_\w$. (This is well known, and is in fact a relatively straightforward consequence of $(\aleph_{\w+1},\aleph_\w) \to\hspace{-3.14mm}\to (\aleph_1,\aleph_\w)$. It is mentioned without proof in \cite[Section 7.6]{Todorcevic}, and an indirect proof can be found in \cite[Section 3]{KMS}.)
The consistency of $(\aleph_{\w+1},\aleph_\w) \to\hspace{-3.14mm}\to (\aleph_1,\aleph_\w)$ was first proved in \cite{MLS} from a large cardinal hypothesis a little stronger than the existence of a huge cardinal.
Later improvements in \cite{EH} reduced the necessary hypothesis to ``just'' a huge cardinal.
\end{proof}

For every set $A$, define
$$\mathrm{cf} [A]^{\mu} \,=\, \min \set{|\F|}{\F \text{ is cofinal in $[A]^\w$}}.$$
Note that $\cf[A]^\w$ depends only on the cardinality of $A$.

If $\F$ is cofinal in $[A]^\w$ then $\bigcup \F = A$, so $|A| = \card{\bigcup \F} \leq \aleph_0 \cdot |\F|$.
If $A$ is uncountable, this implies $|\F| \geq |A|$.
Thus $\cf[\k]^\w \geq \k$ for all uncountable cardinals $\k$.
And if $\cf(\k) = \w$, we get an even stronger bound: a diagonalization argument shows in this case that $\cf [\k]^\w \geq \k^+$.

\emph{Shelah's Strong Hypothesis}, abbreviated $\mathsf{SSH}$, is the following statement.
\begin{itemize}
\item[$\mathsf{SSH}:$] For every uncountable cardinal $\k$,
$$\cf[\k]^\w \,=\, \begin{cases}
\k &\text{if } \cf(\k) > \w, \\
\k^+ &\text{if } \cf(\k) = \w.
\end{cases}$$ \end{itemize}
In other words, $\mathsf{SSH}$ says that $\cf [\k]^\w$ should be as small as possible (subject to the restrictions in the previous paragraph) for every uncountable cardinal $\k$.
The original phrasing of Shelah’s Strong Hypothesis by Shelah is somewhat different: it involves the pseudo-power function from pcf theory. 
The statement here labeled $\mathsf{SSH}$ is equivalent to the original form of Shelah’s Strong Hypothesis by a theorem of
Matet \cite{Matet}. 

\begin{theorem}\label{thm:KFams,Yo}
Suppose $0^\dagger$ does not exist. Then $\mathsf{SSH}$ holds, and for every set $A$ there is a cofinal Kurepa family in $[A]^\w$.
\end{theorem}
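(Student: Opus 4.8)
The plan is to derive both halves of the statement from the Dodd--Jensen covering lemma. Assuming $0^\dagger$ does not exist, covering holds over the Dodd--Jensen core model $K$, which is ``$L$-like'' in the respects that matter here: $K \models \gch$, and at every one of its cardinals $K$ carries at least the weak square principle $\square^*$. From covering over $K$ together with $\gch$ in $K$ one reads off that $\cf[\k]^\w$ equals the least value consistent with the \zfc bounds recalled above, for every uncountable $\k$; this is exactly $\mathsf{SSH}$, and I would cite it (it is a standard consequence of covering; see, e.g., \cite{Kanamori} and the references there) rather than reprove it. Weak covering also gives $(\k^+)^K = \k^+$ for every singular cardinal $\k$ of $V$, so a $\square^*_\k$-sequence of $K$ living on $(\k^+)^K$ is still one on $(\k^+)^V$, its defining clauses being absolute; hence $\square^*_\k$ holds in $V$ for \emph{every} singular cardinal $\k$. (In fact $\square_\k$ itself transfers the same way except at the at-most-one cardinal that is measurable in $K$. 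But $\square_\k$ genuinely can fail at a singular $\k$ under $\neg 0^\dagger$ --- for instance at a former measurable after Prikry forcing --- so Theorem~\ref{thm:KurepaExistence} cannot simply be invoked in $V$.)

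For the cofinal Kurepa families, Theorem~\ref{thm:smallKFams} disposes of $|A| < \aleph_\w$, so suppose $|A| \ge \aleph_\w$; as the property depends only on $|A|$, take $A = \k$ a cardinal. I would obtain the family by re-running the construction behind Theorem~\ref{thm:KurepaExistence} (Todorcevic's argument), checking that it needs only the weak square $\square^*_\k$ in place of $\square_\k$, which the first paragraph has secured in $V$ at every singular $\k$. In that construction a $\square_\k$-sequence is used to pin, to each relevant ordinal, a \emph{single} canonical cofinal club, and the Kurepa condition is verified by observing that any fixed countable set meets only countably many of the resulting canonical traces. A $\square^*_\k$-sequence instead offers, at each ordinal, a family of at most $\k$ candidate clubs; but by consistently choosing the candidate of least index --- and exploiting coherence to propagate that choice --- one can arrange that for a fixed countable set only countably many of these candidates are ever relevant, and countably-many is exactly what the Kurepa condition tolerates. (Equivalently: Todorcevic's argument really only uses that $\k$ is approachable, equivalently that there is a good scale at $\k$, which $\square^*_\k$ supplies.) Passing through the singular cardinals of cofinality $\w$ along the way is where $\mathsf{SSH}$ earns its keep: it keeps $\cf[\mu]^\w = \mu^+$, so the families built en route stay the expected size.

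The genuine obstacle, then, is that single verification --- that Todorcevic's construction survives weakening $\square_\k$ to $\square^*_\k$ --- together with the core-model bookkeeping behind the first paragraph (covering over $K$, and $K$'s squares). If the adaptation of Todorcevic's proof turns out to be delicate, the fallback I would run instead is a direct pcf construction at each singular $\k = \sup_n \k_n$ of cofinality $\w$: take cofinal Kurepa families on the $\k_n$ from the induction hypothesis and a scale of length $\k^+$ in $\prod_n \k_n$ --- of length exactly $\k^+$ since $\mathsf{SSH}$ gives $\mathrm{tcf}\big(\prod_n \k_n / \mathrm{fin}\big) = pp(\k) = \k^+$ --- and interleave them so that each member of the new family is a union $\bigcup_n Y_n$ with $Y_n$ in the $n$-th family and with the ``heights'' of the $Y_n$ tied to a single scale index. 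Cofinality is then easy (catch a countable set block by block and dominate by a scale index); the Kurepa condition is the hard point and is exactly why the scale must be good: the naive union of the $\k_n$-families has trace of size continuum on a generic countable set (a countable product of countable trace-sets), and the goodness of the scale is what forces those block-traces to be eventually trivial, collapsing the product to something countable. Whichever route one takes, this bookkeeping --- synchronizing ``which block is absorbed when'' with the demands of both cofinality and the Kurepa property --- is where essentially all the work of this half of the theorem resides.
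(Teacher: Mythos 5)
The pivot of your argument --- the claim that $\square_\kappa$ can genuinely fail at a singular cardinal under $\neg 0^\dagger$, so that Theorem~\ref{thm:KurepaExistence} cannot be invoked in $V$ and one must retreat to weak square --- is wrong, and it sends you into the only part of your proposal that is not actually carried out. The paper's proof takes exactly the route you reject: by \cite{JensenSteel} and \cite{Sch&Zeeman}, the failure of $\square_\kappa$ at a singular $\kappa$ yields an inner model with a Woodin cardinal, which is far beyond $0^\dagger$; hence $\neg 0^\dagger$ gives \emph{full} $\square_\kappa$ at every singular $\kappa$, and Theorem~\ref{thm:KurepaExistence} applies verbatim. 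Your proposed counterexample does not work: under $\neg 0^\dagger$ the covering model is $K^{\mathrm{DJ}}$, $L[\mu]$, or $L[\mu,C]$ with $C$ Prikry-generic over $L[\mu]$, and each of these satisfies $\square_\kappa$ at every one of its cardinals (square holds in $L[\mu]$ even at its measurable, and a $\square_\kappa$-sequence survives Prikry forcing because cardinals are preserved and the defining clauses are absolute); weak covering then transfers $\square_\kappa$ to $V$ at every $V$-singular $\kappa$, including a former measurable. Consequently the entire second half of your proposal --- re-running Todor\v{c}evi\'c's construction from $\square^*_\kappa$, the asserted equivalence with approachability and good scales, and the pcf fallback with interleaved scales --- is both unnecessary and unverified; as written it is a sketch of a program whose hardest step you yourself flag as ``the genuine obstacle,'' and nothing in the proposal discharges it.

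The $\mathsf{SSH}$ half is essentially the paper's argument, but you compress the case analysis incorrectly: if there \emph{is} an inner model with a measurable cardinal, covering over $K^{\mathrm{DJ}}$ fails outright, and one must instead use the Dodd--Jensen covering lemma for $L[\mu]$ or $L[\mu,C]$ \cite{DJ2}. The paper runs this dichotomy explicitly and then quotes \cite{JK} to extract $\mathsf{SSH}$ from the full covering lemma over an inner model of \gch; your one-sentence version, which mentions only $K^{\mathrm{DJ}}$, silently skips the case that actually requires $\neg 0^\dagger$ rather than merely ``no inner model with a measurable.'' With that repaired, and with the weak-square detour deleted in favor of the direct appeal to Theorem~\ref{thm:KurepaExistence}, your argument collapses to the paper's.
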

\begin{proof}
By work in \cite{JensenSteel} and \cite{Sch&Zeeman}, if $\square_\k$ fails for a singular cardinal $\k$, then there is an inner model for a Woodin cardinal. This implies ``$0^\dagger$ exists'' (with plenty of room to spare). 
By contraposition, if $0^\dagger$ does not exist then $\square_\k$ holds for every singular $\k$. 
By Theorem~\ref{thm:KurepaExistence}, this implies there is a cofinal Kurepa family in $[A]^\w$ for every set $A$.

It is worth mentioning that the failure of $\square_\k$ for singular $\k$ is possibly much stronger than the existence of a Woodin cardinal. (This is already known to be true for $\k > \continuum$: see \cite[Corollary 5]{Sch&Zeeman}. But in the context of this paper, we are more concerned with $\square_\k$ for $\k < \continuum$.) To quote \cite{Sch&Zeeman}, ``the extent to which we can $\dots$ obtain lower bounds on the large cardinal consistency strength of the failure of $\square_\k$ $\dots$ is limited by our ability to construct core models and prove covering theorems for them.''

The bottleneck for this theorem is not the existence of cofinal Kurepa families, which follows from $\square_\k$ holding for singular $\k$, but rather $\mathsf{SSH}$. The exact consistency strength of $\neg\mathsf{SSH}$ is not known. The failure of $\mathsf{SSH}$ at some $\k > \continuum$ is equivalent to the failure of the Singular Cardinals Hypothesis, $\mathsf{SCH}$. The exact consistency strength of this is known, due to work of Gitik \cite{Gitik1,Gitik2}: it is equiconsistent with the existence of a measurable cardinal $\mu$ with Mitchell order $\mu^{++}$.

Gitik's work makes use of the so-called weak covering lemma holding over inner models of \gch. The weak covering lemma does not seem to tell us anything about $\mathsf{SSH}$ for cardinals $\k < \continuum$. However, $\mathsf{SSH}$ follows from the full covering lemma holding over an inner model of \gch. (This was first observed in \cite[Lemmas 4.9 and 4.10]{JK}.)

By work of Dodd and Jensen \cite{DJ}, if there is no inner model containing a
measurable cardinal, then the covering lemma holds over an inner model
of \gch (specifically, the Dodd-Jensen core model $K^\mathrm{DJ}$). By further work
of Dodd and Jensen \cite{DJ2}, if there is an inner model containing a measurable
cardinal but $0^\dagger$ does not exist, then the covering lemma holds over an inner
model of \gch (specifically, either $L[\mu]$ for some measure $\mu$ with $\mathrm{crit}(\mu)$ as
small as possible, or $L[\mu, C]$ for some sequence $C$ Prikry-generic over $L[\mu]$). For more
information, see Mitchell’s article \cite{Mitchell}.

Either way, if $0^\dagger$ does not exist then Jensen’s covering lemma holds over
an inner model M of \gch, and $\mathsf{SCH}$ follows by the results in \cite{JK}.
\end{proof}

To close this section, we include one more fact concerning cofinal Kurepa families that will be used in what follows.

\begin{lemma}\label{lem:Ksize}
Every cofinal Kurepa family in $[A]^\w$ has cardinality $\cf[A]^\w$.
\end{lemma}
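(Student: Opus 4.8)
The plan is to prove both inequalities $|\F| \le \cf[A]^\w$ and $|\F| \ge \cf[A]^\w$ for an arbitrary cofinal Kurepa family $\F$ in $[A]^\w$. The first is immediate: by definition $\cf[A]^\w$ is the \emph{minimum} cardinality of a cofinal family, and $\F$ is in particular cofinal, so $|\F| \ge \cf[A]^\w$. All the work is in the reverse inequality, showing that a cofinal Kurepa family cannot be any larger than the minimum possible size of a cofinal family.

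For the upper bound, fix a witnessing cofinal family $\G \sub [A]^\w$ with $|\G| = \cf[A]^\w$, and also fix the given cofinal Kurepa family $\F$. The idea is to bound $|\F|$ by counting, for each $Y \in \G$, how many members of $\F$ can be ``captured'' by $Y$. First I would show that $\F$ is \emph{covered} by $\G$ in the sense that every $X \in \F$ is contained in some $Y \in \G$ — this is just cofinality of $\G$ applied to $X \in [A]^\w$. So $\F = \bigcup_{Y \in \G} \F_Y$, where $\F_Y = \set{X \in \F}{X \sub Y}$. It therefore suffices to show that each $\F_Y$ is countable, since then $|\F| \le |\G| \cdot \aleph_0 = \cf[A]^\w$ (using that $\cf[A]^\w \ge \k \ge \aleph_1$ is uncountable when $A$ is, and handling the trivial countable case of $A$ separately or noting the statement is vacuous there). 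But $\F_Y = \set{X \in \F}{X \sub Y} = \set{X \cap Y}{X \in \F, \ X \sub Y} \sub \set{X \cap Y}{X \in \F}$, and the latter set is countable precisely because $\F$ is Kurepa (applied to the countable set $Y$). Hence $\F_Y$ is countable, completing the bound.

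The main obstacle — really the only subtlety — is the bookkeeping with small cardinalities: one must be slightly careful that $\cf[A]^\w \cdot \aleph_0 = \cf[A]^\w$, which holds because for uncountable $A$ we have $\cf[A]^\w \ge |A| \ge \aleph_1$ (as noted in the discussion preceding this lemma), and because if $A$ is countable the whole notion of a cofinal Kurepa family behaves trivially (indeed $[A]^\w$ itself has a maximum element, and the statement $|\F| = \cf[A]^\w = 1$ is easily checked, or one simply assumes $A$ infinite uncountable as is implicit throughout). Once that arithmetic point is dispatched, the two inequalities combine to give $|\F| = \cf[A]^\w$, as desired.
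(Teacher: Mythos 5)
Your proof is correct and is essentially the same as the paper's: both fix a cofinal family $\C$ of minimal size, observe that every $X\in\F$ lies below some $Y\in\C$, and use the Kurepa property (via $\set{X\cap Y}{X\in\F}$ being countable) to conclude that only countably many members of $\F$ sit below any fixed $Y$. The only difference is presentational --- you give the direct union bound $|\F|\le|\C|\cdot\aleph_0$ while the paper phrases the same count as a pigeonhole contradiction --- and your aside on the (degenerate, implicitly excluded) countable case is harmless.
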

\begin{proof}
Suppose $\F$ is a cofinal Kurepa family in $[A]^\w$. 
Because $\F$ is cofinal, $|\F| \geq \cf[A]^\w$. 
To prove the reverse inequality, fix some cofinal $\C \sub [A]^\w$ with $|\C| = \cf[A]^\w$.

For every $X \in \F$, choose a set $c(X) \in \C$ with $c(X) \supseteq X$ (which can be done because $\C$ is cofinal).
If $|\C| = \cf[A]^\w < |\F|$, then
by the pigeonhole principle there is some particular $Y \in \C$ such that $\set{X \in \F}{c(X) = Y}$ is uncountable. But then $\set{X \cap Y}{X \in \F} \supseteq \set{X \in \F}{c(X) = Y}$ is uncountable, contradicting our assumption that $\F$ is Kurepa.
\end{proof}

\begin{corollary}\label{cor:KFams}
Assuming $0^\dagger$ does not exist, if $\k$ is an uncountable cardinal with $\cf(\k) = \w$ and $|A| = \k$, then there is a cofinal Kurepa family in $[A]^\w$ with cardinality $\k^+$.
\end{corollary}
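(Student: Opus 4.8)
The plan is to simply assemble the three results already established in this section. First I would invoke Theorem~\ref{thm:KFams,Yo}: since $0^\dagger$ does not exist, $\mathsf{SSH}$ holds, and there is a cofinal Kurepa family $\F$ in $[A]^\w$. The only thing left to verify is that such an $\F$ must have cardinality exactly $\k^+$.

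Next I would compute $|\F|$. By Lemma~\ref{lem:Ksize}, every cofinal Kurepa family in $[A]^\w$ has cardinality $\cf[A]^\w$, so in particular $|\F| = \cf[A]^\w$. Since $\cf[A]^\w$ depends only on $|A|$, and $|A| = \k$, this is just $\cf[\k]^\w$. Finally, because $\cf(\k) = \w$, the case distinction in $\mathsf{SSH}$ gives $\cf[\k]^\w = \k^+$. Chaining these equalities yields $|\F| = \k^+$, which is the assertion of the corollary.

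There is no genuine obstacle at this level: all of the difficulty has already been absorbed into Theorem~\ref{thm:KFams,Yo}, whose proof runs through Jensen's covering lemma and inner-model theory to extract $\mathsf{SSH}$ from the non-existence of $0^\dagger$. The only point that warrants care is the direction of the bounds. We already know outright that $\cf[\k]^\w \geq \k^+$ whenever $\cf(\k) = \w$; what $\mathsf{SSH}$ contributes is the matching upper bound $\cf[\k]^\w \leq \k^+$, and what Lemma~\ref{lem:Ksize} contributes is that a cofinal Kurepa family realizes this value \emph{exactly} rather than merely bounding it from above. Together these force $|\F| = \k^+$ on the nose.
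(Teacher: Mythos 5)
Your proposal is correct and is exactly the paper's argument: the paper also derives the corollary immediately from Theorem~\ref{thm:KFams,Yo} (which supplies both $\mathsf{SSH}$ and the existence of a cofinal Kurepa family) together with Lemma~\ref{lem:Ksize}. You have simply spelled out the chain $|\F| = \cf[A]^\w = \cf[\k]^\w = \k^+$ that the paper leaves implicit.
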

\begin{proof}
This follows immediately from Theorem~\ref{thm:KFams,Yo} and Lemma~\ref{lem:Ksize}.
\end{proof}

\section{A proof of the main theorem}\label{sec:main}

We begin this section by observing that the definition of $\bspec$ does not depend on $\R$: it remains unchanged when $\R$ is replaced by any other uncountable Polish space. This is proved already as Theorem 2.1 in \cite{BBP}, but the proof is short, and is used in the proof of our main theorem, so we reproduce the argument here.

\begin{lemma}\label{lem:bspec}
If $X$ is any uncountable Polish space, then
$$\mathfrak{sp}(\text{\emph{\small Borel}}) = \set{\k > \aleph_0}{\text{there is a partition of } X \text{ into } \k \text{ Borel sets}}.$$
\end{lemma}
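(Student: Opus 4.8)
The plan is to prove Lemma~\ref{lem:bspec} by showing that any two uncountable Polish spaces admit ``Borel-measurable'' correspondences that transport partitions into Borel sets back and forth, with at most a countable correction. The key structural fact is the classical dichotomy: every uncountable Polish space $X$ contains a Borel subset homeomorphic to the Cantor space $2^\w$ (indeed a closed copy, via a Cantor scheme), and on the other hand every Polish space is a one-to-one Borel image of a closed subset of Baire space, and in fact every uncountable Polish space is Borel-isomorphic to $2^\w$ (this is the Borel isomorphism theorem of Kuratowski). So I would first reduce the general statement to comparing an arbitrary uncountable Polish space $X$ with the fixed space $\R$, and then use the Borel isomorphism theorem to get a Borel bijection $h \colon X \to \R$ whose inverse is also Borel.

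Given such an $h$, the argument is essentially a transport of structure. First I would show $\subseteq$: if $\P$ is a partition of $\R$ into $\k$ Borel sets, then $\set{h^{-1}[P]}{P \in \P}$ is a partition of $X$ into $\k$ Borel sets, since preimages of Borel sets under a Borel map are Borel and $h$ being a bijection preserves the partition structure and the cardinality $\k$. Symmetrically, using that $h^{-1}$ is also Borel, any partition of $X$ into $\k$ Borel sets pushes forward to a partition of $\R$ into $\k$ Borel sets. This gives equality of the two sets of cardinals, which is exactly the claim, once one notes that the number of blocks is uncountable on one side iff it is uncountable on the other (a bijection preserves cardinality, so $\k > \aleph_0$ is respected).

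The one point that needs a little care, and which I expect to be the main (though still modest) obstacle, is making sure the cardinality of the partition is genuinely preserved rather than merely bounded: a Borel bijection sends the blocks of $\P$ bijectively to the blocks of the image family, and distinct nonempty blocks have distinct nonempty images, so no collapsing occurs and $|\P|$ is preserved exactly. I would also remark that one does not even need the full Borel isomorphism theorem: it suffices to know that $X$ contains a Borel copy $C$ of $2^\w$ with Borel complement, split a partition of $2^\w$ across $C$ and absorb $X \setminus C$ into one block (this changes the count by at most one finite amount and only when $X \setminus C$ is nonempty), together with the reverse direction obtained by collapsing $X$ onto such a $C$ via a Borel retraction; but invoking Kuratowski's theorem keeps the proof cleanest. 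Either way the argument is short, and the substantive content is entirely classical descriptive set theory, so I would simply cite the Borel isomorphism theorem and spell out the two-line transport-of-structure verification.
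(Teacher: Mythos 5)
Your proposal is correct and follows essentially the same route as the paper: both invoke Kuratowski's Borel isomorphism theorem to obtain a Borel bijection between $\R$ and $X$ and then transport partitions into Borel sets back and forth, noting that a bijection preserves the number of blocks. The alternative sketch via a Borel copy of $2^\w$ is unnecessary, as you yourself observe.
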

\begin{proof}
By a theorem of Kuratowski (see \cite[Theorem 15.6]{Kechris}), any two uncountable Polish spaces are Borel isomorphic: in other words, there is a bijection $f: \R \to X$ such that $A \sub \R$ is Borel if and only if $f[A]$ is Borel. Thus if $\P$ is any partition of $\R$ into Borel sets, then $\set{f[B]}{B \in \P}$ is a partition of $X$ into Borel sets, and if $\mathcal Q$ is any partition of $X$ into Borel sets, then $\set{f^{-1}[B]}{B \in \mathcal Q}$ is a partition of $\R$ into Borel sets.
\end{proof}

\begin{theorem}\label{thm:K}
Let $\k$ be a cardinal and suppose there is a cofinal Kurepa family in $[\k]^\w$. If $\k \in \mathfrak{sp}(\text{\emph{\small Borel}})$, then $\cf[\k]^\w \in \mathfrak{sp}(\text{\emph{\small Borel}})$.
\end{theorem}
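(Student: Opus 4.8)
The plan is to leverage the cofinal Kurepa family to ``stretch'' a given partition of $\R$ into $\k$ Borel sets up to a partition into $\lambda := \cf[\k]^\w$ Borel sets, using that $\lambda$ is precisely the cardinality of any cofinal Kurepa family in $[\k]^\w$ (Lemma~\ref{lem:Ksize}). First I would fix a cofinal Kurepa family $\F \sub [\k]^\w$, so $|\F| = \lambda$ by Lemma~\ref{lem:Ksize}, and fix a partition $\P = \set{B_\a}{\a < \k}$ of some uncountable Polish space into Borel sets; by Lemma~\ref{lem:bspec} we are free to choose the Polish space, and it will be convenient to work with a space of the form $Z^\w$ (or $\R^\w$, or $(2^\w)^\w$) so that a countable product of copies of the space is again the same space up to Borel isomorphism. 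The idea is that each $Y \in \F$, being a countable subset of $\k$, picks out a sub-partition $\set{B_\a}{\a \in Y}$, and the union $\bigcup_{\a \in Y} B_\a$ is a Borel set (countable union of Borel sets); on the product space we can then use the $Y$-th coordinate to record ``which piece $B_\a$, $\a \in Y$, we landed in,'' and across the different $Y \in \F$ this should separate points finely enough to produce $\lambda$-many Borel pieces.

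More concretely, here is the construction I would aim for. Work in $X := Z^\k$ for a suitable uncountable Polish $Z$ — but since $\k$ may be uncountable this is not Polish, so instead I would realize $X$ as an uncountable Polish space by the following standard trick: enumerate a cofinal Kurepa family $\F = \set{Y_i}{i < \lambda}$, and for the \emph{underlying} partition use a single Borel partition of $\R$ into $\k$ pieces indexed by $\k$; then for each $i < \lambda$ consider the map $\varphi_i$ on (a Borel-isomorphic copy of) $\R$ sending a point to the index $\a \in Y_i$ of the unique piece containing it, when that index lies in $Y_i$. The Kurepa property enters to control how these maps interact: for any countable $X_0 \sub \k$, the set $\set{X_0 \cap Y}{Y \in \F}$ is countable, which means that ``locally'' only countably many distinct traces appear, keeping the relevant preimages Borel. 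I would then define an equivalence relation on a product / disjoint-sum space whose classes are the new partition pieces: two points are equivalent iff they agree, for every $Y \in \F$, on which $B_\a$ ($\a \in Y$) they hit. Cofinality of $\F$ guarantees that distinct indices $\a \ne \b < \k$ are separated by some $Y \in \F$ containing both, so the new partition refines (a copy of) $\P$ and in particular has at least $\k$ pieces; a counting argument using $|\F| = \lambda$ and the Kurepa property should pin the number of pieces down to exactly $\lambda$ (it is at most $\lambda^{\aleph_0}$ a priori, and one shows $\lambda^{\aleph_0} = \lambda$ here, or else bounds it directly), and each piece is Borel because it is cut out by countably many Borel conditions.

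The main obstacle, I expect, is the bookkeeping that simultaneously (i) keeps every new piece Borel — this is where the Kurepa property must be used in an essential way, since an uncountable intersection of Borel sets need not be Borel, and the point is that the Kurepa condition collapses the relevant uncountable families of conditions to countable ones — and (ii) gets the count of pieces to land on exactly $\lambda = \cf[\k]^\w$ rather than something larger like $\k^{\aleph_0}$ or $2^{\aleph_0}$; cofinality of $\F$ handles the lower bound $\geq \k$, but the exact value requires that cofinal Kurepa families have size exactly $\cf[\k]^\w$, which is Lemma~\ref{lem:Ksize}, together with the fact (established in the text) that $\cf[\k]^\w \geq \k$, and possibly a small argument that $\bspec$ contains no cardinal strictly between $\k$ and $\cf[\k]^\w$ is forced to collapse — more likely, the construction simply never produces more than $\lambda$ pieces. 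A secondary technical point is ensuring the ambient space on which the new partition lives is genuinely an uncountable Polish space (so that Lemma~\ref{lem:bspec} applies and witnesses $\lambda \in \bspec$); I would handle this by building the partition on a countable power of $2^\w$, using a countable cofinal-in-each-coordinate subfamily or a tree-like indexing, and invoking the Borel isomorphism of all uncountable Polish spaces to transfer back to $\R$.
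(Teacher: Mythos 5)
Your high-level ingredients are the right ones --- pass to a countable power so that each member of the Kurepa family yields a Borel set, use the Kurepa property to collapse uncountable families of conditions to countable ones, and use cofinality plus Lemma~\ref{lem:Ksize} to count the pieces --- but the concrete construction you sketch does not produce a partition into $\lambda = \cf[\k]^\w$ pieces, and the device that actually makes the proof work is missing. Your proposed equivalence relation (``two points are equivalent iff they agree, for every $Y \in \F$, on which $B_\a$ with $\a \in Y$ they hit'') is the wrong invariant: applied to points of $\R$ it simply recovers $\P$ (each point lies in a unique $B_\a$, and cofinality of $\F$ separates distinct indices), giving only $\k$ classes; applied to points of $\R^\w$, the natural ``trace'' invariant $\set{\a}{x(n) \in B_\a \text{ for some } n}$ realizes \emph{every} nonempty countable subset of $\k$, giving $\k^{\aleph_0}$ classes --- which can exceed $\cf[\k]^\w$ --- so neither version lands on $\lambda$. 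Relatedly, your fallback ``one shows $\lambda^{\aleph_0} = \lambda$ here'' is not available: $\cf[\k]^\w$ need not satisfy this (take $\k = \aleph_1$ with $\neg\mathsf{CH}$, where $\lambda = \aleph_1$ but $\lambda^{\aleph_0} = \continuum$).

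The missing idea is a transfinite disjointification over a well-ordering of the Kurepa family, rather than an equivalence relation. The paper works in $\R^\w$ (Polish, so Lemma~\ref{lem:bspec} applies), takes a cofinal Kurepa family $\K$ in $[\P]^\w$, sets $X_\A = (\bigcup \A)^\w$ for $\A \in \K$ (Borel, and these cover $\R^\w$ by cofinality), fixes a well-ordering $\prec$ of $\K$, and puts $Y_\A = X_\A \setminus \bigcup\set{X_\B}{\B \prec \A}$. The Kurepa property is then used exactly where you predicted it must be, but in this specific form: since $\set{\A \cap \B}{\B \in \K}$ is countable and $X_\B \cap X_\A = (\bigcup(\B \cap \A))^\w$, the uncountable union being subtracted reduces, inside $X_\A$, to a countable union, so each $Y_\A$ is Borel. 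The count is obtained not by bounding the classes of an equivalence relation but by showing that the traces $\mathcal Z_\A = \set{B \in \P}{x(n) \in B \text{ for some } x \in Y_\A \text{ and some } n}$ form a cofinal subfamily of $[\P]^\w$, so at least $\cf[\P]^\w = \card{\K}$ of the $Y_\A$ are nonempty; since there are only $\card{\K}$ of them, exactly $\card{\K} = \cf[\k]^\w$ pieces result. Without the well-ordered subtraction, both the Borelness of the pieces and the exact count $\lambda$ remain unestablished in your outline.
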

\begin{proof}
Suppose $\k \in \bspec$, and let $\P$ be a partition of $\R$ into $\k$ Borel sets. Because there is a cofinal Kurepa family in $[\k]^\w$ and $|\P| = \k$, there is a cofinal Kurepa family in $[\P]^\w$. Let $\K$ be some such family.

To prove the theorem, it suffices (by Lemmas~\ref{lem:bspec} and \ref{lem:Ksize}) to show that there is a partition of $\R^\w$ into $\card{\K}$ Borel sets. We adopt the convention that points in $\R^\w$ are functions $\w \to \R$, so that $x(n)$ denotes the $n^{\mathrm{th}}$ coordinate of $x$ for any given $x \in \R^\w$.

For every $\A \in \K$, define
$$X_\A = (\textstyle \bigcup \A)^\w = \set{x \in \R^\w}{x(n) \in \textstyle \bigcup \A \text{ for all }n \in \w}.$$
Note that, because each $\A \in \K$ is a countable collection of Borel subsets of $\R$, each $X_\A$ is a Borel subset of $\R^\w$.

We claim that $\bigcup \set{X_\A}{\A \in \K} = \R^\w$. To see this, let $x \in \R^\w$. For each $n \in \w$, there is some $B_n \in \P$ with $x \in B_n$. Because $\K$ is cofinal in $[\P]^\w$, there is some $\A \in \K$ with $\set{B_n}{n \in \w} \sub \A$. This implies $x \in X_\A$.

Let $\prec$ be a well ordering of $\K$, and for each $\A \in \K$ define
$$Y_\A = X_\A \setminus \textstyle \bigcup \set{X_\B}{\B \prec \A}.$$
Because $\bigcup \set{X_\A}{\A \in \K} = \R^\w$, the $Y_\A$'s form a partition of $\R^\w$, or more precisely (as some of the $Y_\A$'s may be empty), $\mathcal Q = \set{Y_\A}{\A \in \K} \setminus \{\0\}$ is a partition of $\R^\w$. To prove the theorem, it remains to show that $\card{\mathcal Q} = |\K|$, and that every $Y_\A$ is a Borel subset of $\R^\w$.

To see that $\card{\mathcal Q} = |\K|$, it suffices to show that $Y_\A \neq \0$ for $|\K|$-many $\A$. (This is because it is clear from the definition that $Y_\A \cap Y_\B = \0$ whenever $\A \neq \B$; the only non-injectivity in the map $\A \mapsto Y_\A$ is that it may map many sets to $\0$.) For every $\A \in \K$, define
$$\mathcal Z_\A = \set{B \in \P}{x(n) \in B \text{ for some } x \in Y_\A \text{ and some } n \in \w}.$$
Note that $\mathcal Z_\A = \0$ whenever $Y_\A = \0$. 
For every $\A \in \K$, we have $Y_\A \sub X_\A$, which means that
$$\mathcal Z_\A \sub \set{B \in \P}{x(n) \in B \text{ for some } x \in X_\A \text{ and some } n \in \w} = \A.$$
In particular, $\set{\mathcal Z_\A}{\A \in \K}$ is a collection of countable subsets of $\P$. 
We claim this collection is cofinal in $[\P]^\w$. To see this, let $\set{B_n}{n \in \w}$ be an arbitrary countable subset of $\P$, and for each $n \in \w$ fix some $x_n \in B_n$. 
Consider the point $x \in \R^\w$ with $x(n) = x_n$ for all $n \in \w$. Because $\mathcal Q$ is a partition of $\R^\w$, there is some $\A \in \K$ with $x \in Y_\A$. It follows that $\set{B_n}{n \in \w} \sub \mathcal Z_\A$.
As $\set{B_n}{n \in \w}$ was an arbitrary member of $[\P]^\w$, this shows $\set{\mathcal Z_\A}{\A \in \K}$ is cofinal in $[\P]^\w$. 
Hence $\card{\set{\mathcal Z_\A}{\A \in \K}} \geq \cf[\P]^\w = |\K|$, which means $\card{\set{\mathcal Z_\A}{\A \in \K}} = |\K|$.
In particular, $\mathcal Z_\A \neq \0$ for $|\K|$-many $\A \in \K$, 
which implies $Y_\A \neq \0$ for $|\K|$-many $\A \in \K$. 
Hence $|\mathcal Q| = |\K|$.

To see that each $Y_\A$ is Borel in $\R^\w$, we use the Kurepa property of $\K$. 
Fix $\A \in \K$. 
Because $\K$ is Kurepa, there is a countable $\F \sub \K$ such that 
$\set{\A \cap \B}{\B \in \K} = \set{\A \cap \B}{\B \in \F}$. 
This implies 
$$\set{\A \cap \B}{\B \in \K \text{ and } \B \prec \A} = \set{\A \cap \B}{\B \in \G}$$
for some (countable) $\G \sub \F$. 
In particular, 
\begin{align*}
Y_\A &=\textstyle  X_\A \setminus \bigcup \set{ X_\B }{\B \prec \A} \\
&=\textstyle (\bigcup \A)^\w \setminus \bigcup \set{ (\bigcup \B)^\w }{\B \prec \A} \\
&=\textstyle (\bigcup \A)^\w \setminus \bigcup \set{ (\bigcup (\B \cap \A))^\w }{\B \prec \A} \\
&=\textstyle (\bigcup \A)^\w \setminus \bigcup \set{ (\bigcup (\B \cap \A))^\w }{\B \in \G} \\
&=\textstyle X_\A \setminus \bigcup \set{X_\B}{\B \in \G}.
\end{align*}
Because $X_\A$ and all of the $X_\B$'s are Borel sets in $\R^\w$, 
and because $\G$ is countable, it follows that $Y_\A$ is Borel.
\end{proof}

\begin{theorem}\label{thm:main}
Assume $0^\dagger$ does not exist.
If $\k$ is a singular cardinal with $\cf(\k) = \w$ and $\k \in \mathfrak{sp}(\text{\emph{\small Borel}})$, then $\k^+ \in \mathfrak{sp}(\text{\emph{\small Borel}})$.
\end{theorem}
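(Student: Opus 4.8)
The plan is to obtain Theorem~\ref{thm:main} as an immediate consequence of Theorem~\ref{thm:K} together with the cofinal Kurepa machinery established in Section~\ref{sec:Kurepa}. Suppose $0^\dagger$ does not exist, let $\k$ be a singular cardinal with $\cf(\k) = \w$, and assume $\k \in \bspec$. First I would invoke Theorem~\ref{thm:KFams,Yo}: since $0^\dagger$ does not exist, there is a cofinal Kurepa family in $[\k]^\w$ (and indeed in $[A]^\w$ for every set $A$), and moreover $\mathsf{SSH}$ holds.

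Next I would apply Theorem~\ref{thm:K} with this cardinal $\k$: because there is a cofinal Kurepa family in $[\k]^\w$ and $\k \in \bspec$, we conclude that $\cf[\k]^\w \in \bspec$. It then remains only to identify $\cf[\k]^\w$. Since $\cf(\k) = \w$, Shelah's Strong Hypothesis gives $\cf[\k]^\w = \k^+$ directly from its case distinction. Substituting this into the conclusion of Theorem~\ref{thm:K} yields $\k^+ \in \bspec$, which is exactly the statement of the theorem.

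There is essentially no obstacle here beyond correctly assembling the pieces: all the real work has already been done in Theorem~\ref{thm:K} (the combinatorial partition construction using the Kurepa property) and in Theorem~\ref{thm:KFams,Yo} (the descent from $\neg 0^\dagger$ through $\square_\k$, covering lemmas, and $\mathsf{SSH}$). One small point worth stating explicitly in the write-up is that $\k$ is uncountable — this is automatic since $\k$ is assumed to be a singular cardinal, and singular cardinals are uncountable — so that the hypotheses of the relevant lemmas about $[\k]^\w$ apply without comment. The proof is therefore just the two-line deduction: Kurepa family exists by Theorem~\ref{thm:KFams,Yo}, hence $\cf[\k]^\w \in \bspec$ by Theorem~\ref{thm:K}, hence $\k^+ \in \bspec$ by $\mathsf{SSH}$.
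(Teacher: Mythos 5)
Your proof is correct and follows essentially the same route as the paper, which derives the theorem from Theorem~\ref{thm:K} together with Corollary~\ref{cor:KFams} (the latter being exactly the packaging of Theorem~\ref{thm:KFams,Yo}, $\mathsf{SSH}$, and Lemma~\ref{lem:Ksize} that you assemble by hand). The only cosmetic difference is that you cite the ingredients individually rather than the corollary.
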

\begin{proof}
This follows immediately from Corollary~\ref{cor:KFams} and Theorem~\ref{thm:K}.
\end{proof}

This completes our first proof of the main theorem. 
Note that the hypothesis ``$0^\dagger$ does not exist'' can be replaced with the possibly weaker hypothesis ``$\cf[\k]^\w = \k^+$ and there is a cofinal Kurepa family in $[\k]^\w$.''

\begin{question}
Is it consistent, relative to some large cardinal hypothesis, that there is a singular cardinal $\lambda$ with $\cf(\lambda) = \w$ such that $\lambda \in \mathfrak{sp}(\text{\emph{\small Borel}})$ but $\lambda^+ \notin \mathfrak{sp}(\text{\emph{\small Borel}})$?
\end{question}

\begin{question}
In particular, is it consistent relative to some large cardinal hypothesis that $\aleph_\w \in \mathfrak{sp}(\text{\emph{\small Borel}})$ but $\aleph_{\w+1} \notin \mathfrak{sp}(\text{\emph{\small Borel}})$?
\end{question}

\begin{question}
Is it consistent, relative to some large cardinal hypothesis, that there is a singular cardinal $\lambda$ with $\cf(\lambda) = \w$ such that $\lambda \in \mathfrak{sp}(\text{\emph{\small Borel}})$ but $\cf[\lambda]^\w \notin \mathfrak{sp}(\text{\emph{\small Borel}})$?
\end{question}

\section{A second proof of the main theorem}\label{sec:encore}

In this section we give our second proof of the main theorem. This second proof has a more topological flavor than the one presented in the previous section. Throughout this section, all ordinals are considered to have the discrete topology. Thus, for example, $\k^\w$ is a completely metrizable space, sometimes called the Baire space of weight $\k$ (e.g., in \cite[Example 4.2.12]{Engelking}). 

Given a topological space $X$, define
$$\paar(X) = \min \set{|\P|}{\P \text{ is a partition of } X \text{ into Polish spaces} }.$$
Note that $\paar(X)$ is well-defined and $\leq\!|X|$ for every space $X$, because we may partition $X$ into singletons.
The invariant $\paar(X)$ was introduced and studied in \cite{BCP}, the main idea being to compare $\paar(X)$ with $\mathfrak{cov}(X)$ (defined as the least size of a covering of $X$ with Polish spaces) when $X$ is completely metrizable. 

The next two lemmas, which are implicit in \cite{BCB} and \cite{Brian&Miller}, allow us to transfer facts about the spaces $\k^\w$ and $\paar(\k^\w)$ to facts about $\bspec$.

\begin{lemma}\label{lem:pushforward}
If there is a continuous bijection $X \to \R$, for some space $X$, then there is a partition of $\R$ into $\paar(X)$ Borel sets.
\end{lemma}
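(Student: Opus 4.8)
The plan is to push a witnessing partition of $X$ forward along the given continuous bijection and invoke the Lusin--Souslin theorem to see that the images are Borel. So first I would fix a continuous bijection $f \colon X \to \R$, and then, by the definition of $\paar(X)$, fix a partition $\P$ of $X$ into exactly $\paar(X)$ nonempty Polish spaces. The candidate partition of $\R$ will be $\set{f[P]}{P \in \P}$.

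The main point is to check that each $f[P]$ is Borel. Here I would observe that, although $X$ itself need not be well-behaved, for each $P \in \P$ the restriction $f \rest P \colon P \to \R$ is a continuous injection from a Polish space into a Polish space; by the Lusin--Souslin theorem (see \cite[Theorem 15.1]{Kechris}), the injective continuous image of a Polish space in a Polish space is Borel, so $f[P]$ is a Borel subset of $\R$. Next I would check that $\set{f[P]}{P \in \P}$ really is a partition of $\R$: the sets are pairwise disjoint because $f$ is injective and the $P$'s are pairwise disjoint, and their union is $\R$ because $f$ is surjective and the $P$'s cover $X$. Finally, since $f$ is injective and each $P$ is nonempty, the map $P \mapsto f[P]$ is a bijection from $\P$ onto this partition, so it has cardinality $|\P| = \paar(X)$. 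Putting these together gives a partition of $\R$ into $\paar(X)$ Borel sets.

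There is no serious obstacle here; the only real content is the appeal to the Lusin--Souslin theorem, and the one thing to be a little careful about is that it is applied to each piece $P$ separately (where the subspace topology is Polish) rather than to $X$ as a whole.
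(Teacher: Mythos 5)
Your proposal is correct and follows exactly the same route as the paper: push a $\paar(X)$-sized partition of $X$ into Polish pieces forward along the continuous bijection and apply the Lusin--Souslin theorem to each restriction $f \rest P$ to see that the images are Borel. The extra checks you note (disjointness, covering, and injectivity of $P \mapsto f[P]$) are immediate and are left implicit in the paper.
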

\begin{proof}
Suppose $f: X \to \R$ is a continuous bijection, and let $\P$ be a partition of $X$ into $\paar(X)$ Polish spaces. 
By a theorem of Lusin and Suslin \cite[Theorem 15.1]{Kechris}, 
if $A$ is Polish and $g: A \to \R$ is a continuous injection, then $g[A]$ is Borel in $\R$ (in fact, $g[B]$ is Borel in $\R$ for every Borel $B \sub A$). 
In particular, $f \rest A$ is a continuous injection $A \to \R$ for every $A \in \P$, which means $f[A]$ is Borel in $\R$ for every $A \in \P$. Hence $\set{f[A]}{A \in \P}$ is a partition of $\R$ into Borel sets.
\end{proof}

\begin{lemma}\label{lem:steppingup}
Let $\k$ be an infinite cardinal. If there is a partition of $\R$ into $\k$ Borel sets, then there is a continuous bijection $\k^\w \to \R$.
\end{lemma}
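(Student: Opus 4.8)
The plan is to construct the continuous bijection $\k^\w \to \R$ by stitching together, coordinate by coordinate, a family of continuous bijections that reflect the given partition. Let $\P = \set{B_\a}{\a < \k}$ be a partition of $\R$ into $\k$ Borel sets. The first observation is that each $B_\a$, being an uncountable-or-countable Borel subset of a Polish space, is itself a Polish space in the subspace topology exactly when it is $G_\dlt$; since Borel sets need not be $G_\dlt$, we instead pass to the standard trick: each Borel $B_\a \sub \R$ carries a finer Polish topology $\t_\a$ (refining the subspace topology, generating the same Borel sets, by \cite[Theorem 13.1 or 13.3]{Kechris}), and the identity map $(B_\a,\t_\a) \to (B_\a, \text{subspace})$ is a continuous bijection. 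Because every nonempty Polish space is a continuous bijective image of a closed subspace of $\w^\w$, and $\w^\w$ embeds as a clopen subspace of $\k^\w$ (using just the first $\aleph_0$ coordinate-values), each $(B_\a,\t_\a)$ is a continuous bijective image of some subset of $\k^\w$; with a little care one arranges a continuous bijection $h_\a \colon C_\a \to B_\a$ where the $C_\a$ are pairwise disjoint clopen (or at least Borel-nice) pieces. The cleanest route: fix for each $\a < \k$ a continuous bijection $g_\a\colon F_\a \to B_\a$ from a closed $F_\a \sub \w^\w$, then let the first coordinate of a point in $\k^\w$ select the index $\a$ and the remaining coordinates range over $\w^\w$, restricted to $F_\a$.

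More concretely, I would define $X = \set{x \in \k^\w}{\,\langle x(n+1) : n \in \w\rangle \in F_{x(0)}\,}$, which is a closed subset of $\k^\w$, and the map $x \mapsto g_{x(0)}\big(\langle x(n+1):n\in\w\rangle\big)$ is a continuous bijection $X \to \R$ (continuity because $x(0)$ is locally constant and $g_{x(0)}$ is continuous; bijectivity because the $B_\a$ partition $\R$ and each $g_\a$ is a bijection). Then I need to upgrade the domain from the closed set $X$ to all of $\k^\w$. Here one uses that $X$ is a nonempty closed subset of $\k^\w$ and that $\k^\w$ is zero-dimensional completely metrizable of weight $\k$; a closed subset $X$ of $\k^\w$ (with $|X| = |\k^\w|$, which holds here since $|\R| \le \k$ forces $\k \ge \continuum \ge |\k^\w|$ when... actually more carefully, $|\k^\w| = \k^{\aleph_0} \ge \k \ge |\P| = |\R|$, and also $|X| = |\R| = \continuum$, while $|\k^\w| = \k^{\aleph_0}$) — so I should instead compose with a continuous bijection $\k^\w \to X$. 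Such a map exists because $\k^\w$ is homeomorphic to $\k^\w \times \k^\w$ and $X$ retracts appropriately; the standard fact is that any nonempty closed subspace of $\k^\w$ is a continuous image of $\k^\w$, and in fact a continuous \emph{bijective} image once cardinalities permit. I would cite or reprove: $\k^\w$ admits a continuous bijection onto any of its nonempty closed subsets of the same cardinality, via a Cantor--Bendixson-style or direct tree argument on $\k^{<\w}$.

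The main obstacle I anticipate is precisely this last step — producing a continuous bijection $\k^\w \to X$ rather than merely a continuous surjection — together with the bookkeeping to guarantee the cardinality hypotheses line up ($\k^{\aleph_0}$ versus $\continuum$ versus $\k$). If $\k^{\aleph_0} > \continuum$ this is genuinely delicate, since then $|\k^\w| > |\R|$ and no bijection $\k^\w \to \R$ can exist at all! So in fact the lemma must implicitly be using that the hypothesis ``$\R$ partitions into $\k$ Borel sets'' forces $\k \le \continuum$, hence $\k^{\aleph_0} = \continuum$ (as $\continuum \le \k^{\aleph_0} \le \continuum^{\aleph_0} = \continuum$). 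With that in hand, $|\k^\w| = \continuum = |\R| = |X|$, and the tree-surgery argument on $\k^{<\w}$ to build the continuous bijection $\k^\w \to X$ goes through. I would structure the write-up as: (i) reduce each Borel piece to a continuous bijective image of a closed subset of $\w^\w \hookrightarrow \k^\w$; (ii) amalgamate into a continuous bijection from a closed $X \sub \k^\w$ onto $\R$; (iii) observe $\k \le \continuum$ so cardinalities match; (iv) build a continuous bijection $\k^\w \to X$ and compose.
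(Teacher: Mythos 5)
Your steps (i)--(iii) track the paper's opening moves (Lusin--Suslin gives continuous bijections $f_A\colon F_A\to A$ from closed sets $F_A\sub\w^\w$, and these amalgamate into a continuous bijection from a closed set $X\homeo\bigsqcup_{\a<\k}F_\a\sub\k^\w$ onto $\R$), but step (iv) is a genuine gap, and it sits exactly where you predicted trouble. The ``standard fact'' you invoke --- that $\k^\w$ admits a continuous bijection onto any nonempty closed subset of the same cardinality --- is false. A continuous bijection onto a space with an isolated point pulls that point back to an isolated point of the domain, and $\k^\w$ has none; yet your $X$ can have isolated points, indeed can be discrete: take $\k=\continuum$ and $\P$ the partition of $\R$ into singletons, so each $F_\a$ is a single point and $X$ is discrete of size $\continuum$. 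Even for $X$ without isolated points the claim fails: whether there is a continuous bijection $\k^\w\to 2^\w$ is, by Theorem~\ref{thm:Banach}, essentially equivalent to $\k\in\mathfrak{sp}(\text{\small Borel})$, so for instance in a Cohen model with $\continuum=\aleph_{17}$ there is no continuous bijection $\aleph_3^\w\to 2^\w$ even though $\card{\aleph_3^\w}=\continuum=\card{2^\w}$. No cardinality bookkeeping or tree argument on $\k^{<\w}$ can produce such a map.

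The paper avoids this step entirely with two moves your outline is missing. First, it fattens each piece: $(x,y)\mapsto(f_A(x),y)$ is a continuous bijection $F_A\times\w^\w\to A\times\w^\w$, and by the Alexandroff-Urysohn characterization $F_A\times\w^\w\homeo\w^\w$ --- this is where the possible degeneracy of the $F_A$ (singletons, compacta, sets with isolated points) gets washed out. Summing over $A\in\P$ yields a continuous bijection $G\colon\k\times\w^\w\to\w^\w\times\w^\w$, whose domain is now exactly $\k\times\w^\w$ rather than an unrecognizable closed subset of $\k^\w$. Second, instead of trying to map $\k^\w$ onto that domain, the paper takes the countable power: $G^\w$ is a continuous bijection $(\k\times\w^\w)^\w\to(\w^\w\times\w^\w)^\w$, and $(\k\times\w^\w)^\w\homeo\k^\w$ while $(\w^\w\times\w^\w)^\w\homeo\w^\w$; composing with a continuous bijection $\w^\w\to\R$ finishes. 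To salvage your write-up, replace each $F_\a$ by $F_\a\times\w^\w$ before amalgamating and then apply this diagonal-power trick; your observation that $\k\le\continuum$ and $\k^{\aleph_0}=\continuum$ is correct but does not by itself repair step (iv).
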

\begin{proof}
Suppose $\k \in \bspec$. By Lemma~\ref{lem:bspec}, this implies there is a partition $\P$ of the Baire space $\w^\w$ into $\k$ Borel sets. 

By a theorem of Lusin and Suslin \cite[Theorem 13.7]{Kechris}, every Borel subset of a Polish space is the bijective continuous image of a closed subset of $\w^\w$. In particular, for every $A \in \P$, there is a closed $F_A \sub \w^\w$ and a continuous bijection $f_A: F_A \to A$. 
This implies that the map $(x,y) \mapsto (f_A(x),y)$ is a continuous bijection $F_A \times \w^\w \to A \times \w^\w$. 
By the Alexandroff-Urysohn characterization of the Baire space $\w^\w$ \cite[Exercise 7.2.G]{Engelking}, $F_A \times \w^\w \homeo \w^\w$.
Thus for every $A \in \P$, there is a continuous bijection $g_A: \w^\w \to A \times \w^\w$.

Let $\mathcal Q = \set{A \times \w^\w}{A \in \P}$; this is a partition of $\w^\w \times \w^\w$ into $\k$ Borel sets.
Taking the disjoint union of the mappings $g_A$ from the previous paragraph, we obtain a continuous bijection $G: \k \times \w^\w \to \w^\w \times \w^\w$.

Let $G^\w$ denote the mapping $(x_0,x_1,x_2,\dots) \mapsto (G(x_0),G(x_1),G(x_2),\dots)$ (sometimes called the ``diagonal mapping''). 
Because $G$ is a continuous bijection $\k \times \w^\w \to \w^\w \times \w^\w$,
the diagonal mapping $G^\w$ is a continuous bijection $(\k \times \w^\w)^\w \to (\w^\w \times \w^\w)^\w$. But $(\k \times \w^\w)^\w \homeo \k^\w$ and $(\w^\w \times \w^\w)^\w \homeo \w^\w$ (both of these facts follow from \cite[Exercise 7.2.G]{Engelking}), so this shows there is a continuous bijection $\k^\w \to \w^\w$.

To finish the proof, simply compose this continuous bijection $\k^\w \to \w^\w$ with a continuous bijection $\w^\w \to \R$. (Recall that every Polish space without isolated points is a continuous bijective image of $\w^\w$ \cite[Exercise 7.15]{Kechris}.)
\end{proof}

\begin{theorem}\label{thm:nearlythere}
Let $\k$ be an uncountable cardinal. If $\k \in \bspec$, then $\paar(\k^\w) \in \bspec$ also.
\end{theorem}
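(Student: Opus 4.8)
The plan is to combine the two preceding lemmas with one elementary observation about the topology of $\k^\w$. Since $\k \in \bspec$, Lemma~\ref{lem:steppingup} produces a continuous bijection $\k^\w \to \R$. Feeding this into Lemma~\ref{lem:pushforward} yields a partition of $\R$ into exactly $\paar(\k^\w)$ Borel sets — exactly that many, because a bijection carries a partition onto a partition of the same cardinality. So, to conclude $\paar(\k^\w) \in \bspec$, the only remaining point is to check that $\paar(\k^\w)$ is uncountable; equivalently, that $\k^\w$ cannot be partitioned into countably many Polish spaces.

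For that I would argue as follows. First, $\k^\w$ is not separable: the projection onto the first coordinate is a continuous surjection of $\k^\w$ onto the discrete space $\k$, which is not separable since $\k > \aleph_0$ (alternatively, the constant sequences form a closed discrete subspace of size $\k$). On the other hand, if a space $X$ is a union of countably many subspaces $P_0, P_1, P_2, \ldots$, each of which is separable, then choosing a countable dense $D_n \sub P_n$ for each $n$, the countable set $\bigcup_n D_n$ is dense in $X$; hence $X$ is separable. Consequently a partition of $\k^\w$ into countably many Polish — hence separable — pieces would make $\k^\w$ separable, which is absurd. Therefore $\paar(\k^\w) \geq \aleph_1$, and the theorem follows by combining this with the partition of $\R$ obtained above.

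There is not really a serious obstacle here: once Lemmas~\ref{lem:steppingup} and \ref{lem:pushforward} are in hand, the argument is a short chain of implications, and the only genuinely new input — the non-separability of $\k^\w$ — is routine. The one place to be slightly careful is to make sure that what Lemma~\ref{lem:pushforward} delivers is a partition of $\R$ of cardinality exactly $\paar(\k^\w)$ (not merely a cover, and not of smaller size), so that $\paar(\k^\w)$ genuinely lands in $\bspec$; this is immediate because the map $A \mapsto f[A]$ is injective on any partition of $\k^\w$ when $f$ is a bijection.
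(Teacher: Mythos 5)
Your proof is correct and follows exactly the paper's route: Theorem~\ref{thm:nearlythere} is obtained there by immediately combining Lemma~\ref{lem:steppingup} with Lemma~\ref{lem:pushforward}. Your additional verification that $\paar(\k^\w)$ is uncountable (via non-separability of $\k^\w$) is a sound and welcome check of a point the paper leaves implicit, since membership in $\bspec$ requires the partition to be uncountable.
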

\begin{proof}
This follows immediately from Lemmas~\ref{lem:pushforward} and \ref{lem:steppingup}.
\end{proof}

Note the similarity between Theorem~\ref{thm:K} and Theorem~\ref{thm:nearlythere}. 
Our next theorem shows that $\paar(\k^\w) = \cf[\k]^\w$ if there is a cofinal Kurepa family on $\k$, which will complete our second proof of Theorem~\ref{thm:K} and the main theorem (which follows directly from it).
We note that the equality $\paar(\k^\w) = \cf[\k]^\w$ need not be true in general: it is consistent relative to a huge cardinal to have $\paar(\k^\w) > \cf[\k]^\w$ (see \cite[Section 3]{BCP}).

\begin{theorem}\label{thm:par}
Let $\k$ be an uncountable cardinal. If there is a cofinal Kurepa family on $\k$, then $\paar(\k^\w) = \cf[\k]^\w$.
\end{theorem}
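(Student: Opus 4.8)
The plan is to prove the two inequalities $\paar(\k^\w) \le \cf[\k]^\w$ and $\paar(\k^\w) \ge \cf[\k]^\w$ separately. The second is a \zfc fact about the space $\k^\w$ that uses nothing about Kurepa families --- consistent with the quoted possibility that $\paar(\k^\w) > \cf[\k]^\w$ strictly --- so only the first inequality will exploit the hypothesized cofinal Kurepa family. The first will rerun, inside $\k^\w$, the construction from the proof of Theorem~\ref{thm:K}.

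For the lower bound, the starting observation is that every separable subspace $S$ of $\k^\w$ lies inside $A^\w$ for some countable $A \sub \k$: if $D \sub S$ is countable and dense, put $A = \bigcup\set{\mathrm{ran}(x)}{x \in D}$, a countable subset of $\k$; were some $x \in S$ to have a coordinate $x(n) = \b \notin A$, then the basic clopen set $\set{y \in \k^\w}{y(n) = \b}$ would be a neighbourhood of $x$ meeting $S$ but disjoint from $D$, contradicting density. Granting this, let $\P$ be any partition --- indeed any cover --- of $\k^\w$ by Polish (hence separable) subspaces, and for each $P \in \P$ fix a countable $A_P \sub \k$ with $P \sub A_P^\w$. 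Then $\bigcup_{P \in \P} A_P^\w \supseteq \bigcup \P = \k^\w$, and $\set{A_P}{P \in \P}$ is cofinal in $[\k]^\w$: an arbitrary $B \in [\k]^\w$ is the range of some $x \in \k^\w$, and $x \in A_P^\w$ for some $P$ forces $B \sub A_P$. Hence $\card{\P} \ge \card{\set{A_P}{P \in \P}} \ge \cf[\k]^\w$.

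For the upper bound, fix a cofinal Kurepa family $\K$ on $\k$, which by Lemma~\ref{lem:Ksize} has cardinality $\cf[\k]^\w$. For $\A \in \K$ put $X_\A = \A^\w \sub \k^\w$; since $\A$ is a countably infinite discrete space, $X_\A \homeo \w^\w$, so $X_\A$ is Polish. Cofinality of $\K$ gives $\bigcup_{\A \in \K} X_\A = \k^\w$, since every $x \in \k^\w$ has $\mathrm{ran}(x) \sub \A$ for some $\A \in \K$. Well-order $\K$ by $\prec$ and set $Y_\A = X_\A \setminus \bigcup\set{X_\B}{\B \prec \A}$; the nonempty $Y_\A$'s then partition $\k^\w$ into at most $\card{\K} = \cf[\k]^\w$ pieces, so the only remaining point is that each $Y_\A$ is Polish. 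Here the Kurepa property is used exactly as in Theorem~\ref{thm:K}: it yields a countable $\G \sub \K$ with $\set{\A \cap \B}{\B \prec \A} = \set{\A \cap \B}{\B \in \G}$, and since $X_\A \cap X_\B = (\A \cap \B)^\w$ for all $\A,\B$, this rewrites $Y_\A$ as $X_\A \setminus \bigcup\set{X_\B}{\B \in \G}$.

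The step I regard as the one genuine subtlety is the observation that each $X_\A \cap X_\B = (\A \cap \B)^\w$ is not merely Borel but \emph{closed} in $X_\A = \A^\w$: its complement within $\A^\w$ is $\bigcup_{n \in \w}\set{x \in \A^\w}{x(n) \in \A \setminus \B}$, a union of basic open sets. Consequently $Y_\A$ is the complement, inside the Polish space $\A^\w$, of a countable union of closed sets, i.e.\ a $G_\dlt$ subset of a Polish space; such a set is itself Polish (a classical fact; see, e.g., \cite[Theorem 3.11]{Kechris}). This is precisely the improvement over Theorem~\ref{thm:K} that $\paar$ demands --- the pieces must be Polish, not merely Borel. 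Combining the two bounds yields $\paar(\k^\w) = \cf[\k]^\w$, with everything outside that last step being routine or directly parallel to the earlier argument.
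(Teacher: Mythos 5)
Your proof is correct and follows essentially the same route as the paper's: the same sets $X_\A = \A^\w$ and $Y_\A = X_\A \setminus \bigcup\set{X_\B}{\B \prec \A}$, the same use of the Kurepa property to replace $\set{X_\B}{\B \prec \A}$ by a countable subfamily, and the same observation that $Y_\A$ is then a $G_\dlt$ in the Polish space $\A^\w$. The only (cosmetic) difference is that you prove the lower bound $\paar(\k^\w) \ge \cf[\k]^\w$ explicitly for an arbitrary partition into separable pieces, whereas the paper runs the analogous cofinality argument (via the sets $\mathcal Z_A$) only for the partition it constructs; your version is, if anything, slightly cleaner on that point.
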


The following proof contains some ideas similar to what is found in \cite[Sections 2 and 4]{BCP}, where
what is called Corollary\ref{cor:ParAlephOmega} below was first proved.
Something similar to the proof of Theorem~\ref{thm:par} is implicit in \cite{BCP}, though Jensen matrices are used there rather than Kurepa families.
However, the arguments in \cite{BCP} are aimed at proving something rather different, and they feature several ideas that are not relevant here. 
What follows is a relatively short proof, modifying and distilling from \cite{BCP} what is needed for our analysis of $\bspec$. 

\begin{proof}[Proof of Theorem~\ref{thm:par}]
Let $\K$ be a cofinal Kurepa family on $\k$.
To prove the theorem, it suffices, by Lemma~\ref{lem:Ksize}, to show there is a partition of $\k^\w$ into $\card{\K}$ Polish spaces. Recall that a subspace of a completely metrizable space is itself completely metrizable if and only if it is $G_\dlt$. Thus a subspace of $\k^\w$ is Polish if and only if it is both second countable and $G_\dlt$.

For every $A \in \K$, note that $A^\w$ is second countable (because $A$ is countable) and closed in $\k^\w$.
Furthermore, we claim $\k^\w$ is covered by the sets of this form: i.e., $\bigcup \set{A^\w}{A \in \K} = \k^\w$. 
To see this, let $x \in \k^\w$. As $\K$ is cofinal in $[\k]^\w$, there is some $A \in \K$ with $\set{x(n)}{n \in \w} \sub A$, and this means $x \in A^\w$. As $x$ was arbitrary, $\bigcup \set{A^\w}{A \in \K} = \k^\w$.

Let $\prec$ be a well ordering of $\K$, and for each $A \in \K$ define
$$Y_A = A^\w \setminus \textstyle \bigcup \set{B^\w}{B \prec A}.$$
Because $\bigcup \set{A^\w}{A \in \K} = \k^\w$, the $Y_A$'s form a partition of $\k^\w$, or more precisely (because some of the $Y_A$ may be empty), $\mathcal Q = \set{Y_A}{A \in \K} \setminus \{\0\}$ is a partition of $\k^\w$. To prove the theorem, it remains to show that $\card{\mathcal Q} = |\K|$, and that every $Y_A \in \mathcal Q$ is Polish.

To see that $\card{\mathcal Q} = \cf[\k]^\w$, define for every $A \in \K$
$$\mathcal Z_A = \set{\a \in \k}{x(n) = \a \text{ for some } x \in Y_A \text{ and some } n \in \w}.$$
Note that we have $\mathcal Z_A = \0$ whenever $Y_A = \0$. 
For every $A \in \K$, we have $Y_A \sub X_A$, which means that
$$\mathcal Z_A \sub \set{\a \in \k}{x(n) = \a \text{ for some } x \in A^\w \text{ and some } n \in \w} = A.$$
In particular, $\set{\mathcal Z_A}{A \in \K}$ is a collection of countable subsets of $\k$. 
We claim this collection is cofinal in $[\k]^\w$. To see this, let $\set{x_n}{n \in \w}$ be an arbitrary countable subset of $\k$, and define $x \in \k^\w$ by setting $x(n) = x_n$ for all $n \in \w$. Because $\mathcal Q$ is a partition of $\k^\w$, there is some $A \in \K$ with $x \in Y_A$. It follows that $\set{x_n}{n \in \w} \sub \mathcal Z_A$.
As $\set{x_n}{n \in \w}$ was arbitrary, it follows that $\set{\mathcal Z_A}{A \in \K}$ is cofinal in $[\k]^\w$. 
Hence $\card{\set{\mathcal Z_A}{A \in \K}} \geq \cf[\k]^\w = |\K|$.
In particular, $\mathcal Z_A \neq \0$ for $|\K|$-many $A \in \K$. 
Consequently, $Y_A \neq \0$ for $|\K|$-many $A \in \K$.
As the $Y_A$ are disjoint, this implies $\card{\mathcal Q} = |\K|$.

To see that each $Y_A$ is Polish, we use the fact that $\K$ is Kurepa. 
Fix $A \in \K$. 
Because $\K$ is Kurepa, there is a countable $\F \sub \K$ such that 
$\set{A \cap B}{B \in \K} = \set{A \cap B}{B \in \F}$. 
This implies 
$$\set{A \cap B}{B \in \K \text{ and } B \prec A} = \set{A \cap B}{B \in \G}$$
for some (countable) $\G \sub \F$. 
In particular, 
\begin{align*}
Y_A &=\textstyle  A^\w \setminus \bigcup \set{ B^\w }{B \prec A} \\
&=\textstyle A^\w \setminus \bigcup \set{ (B \cap A)^\w }{B \prec A} \\
&=\textstyle A^\w \setminus \bigcup \set{ (B \cap A)^\w }{B \in \G} \\
&=\textstyle A^\w \setminus \bigcup \set{B^\w}{B \in \G}.
\end{align*}
Because $A^\w$ is second countable and closed, and each of the $B^\w$ is also closed, this shows $Y_A$ is second countable and $G_\dlt$, i.e., Polish.
\end{proof}

\begin{corollary}\label{cor:ParAlephOmega}
If $0 < n < \w$, then $\paar(\w_n^\w) = \aleph_n$.
Furthermore, if $0^\dagger$ does not exist then for every cardinal $\k > \aleph_0$,
$$\paar(\k^\w) = \cf[\k]^\w = \begin{cases}
\k &\text{if } \cf(\k) > \w, \\
\k^+ &\text{if } \cf(\k) = \w.
\end{cases}$$
\end{corollary}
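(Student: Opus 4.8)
The corollary has two assertions, and both follow by feeding the appropriate existence results for cofinal Kurepa families into Theorem~\ref{thm:par}. The plan is as follows. For the first assertion, fix $n$ with $0 < n < \w$. By Theorem~\ref{thm:smallKFams} there is a cofinal Kurepa family in $[\w_n]^\w$ (since $\aleph_n < \aleph_\w$), so Theorem~\ref{thm:par} gives $\paar(\w_n^\w) = \cf[\w_n]^\w$. It then remains to compute $\cf[\w_n]^\w$. Since $\cf(\w_n) > \w$ for $0 < n < \w$, the general lower bound $\cf[\k]^\w \geq \k$ for uncountable $\k$ (recorded in Section~\ref{sec:Kurepa}) gives $\cf[\w_n]^\w \geq \aleph_n$; the matching upper bound $\cf[\w_n]^\w \le \aleph_n$ follows because Theorem~\ref{thm:smallKFams} produces a cofinal family of size at most $\aleph_n$ (indeed Lemma~\ref{lem:Ksize} tells us every cofinal Kurepa family in $[\w_n]^\w$ has size exactly $\cf[\w_n]^\w$, and such a family is a subset of $[\w_n]^\w$, which has cardinality $\aleph_n$). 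Hence $\cf[\w_n]^\w = \aleph_n$ and $\paar(\w_n^\w) = \aleph_n$.

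For the second assertion, assume $0^\dagger$ does not exist and fix an uncountable cardinal $\k$. By Theorem~\ref{thm:KFams,Yo} there is a cofinal Kurepa family in $[\k]^\w$, so Theorem~\ref{thm:par} applies and yields $\paar(\k^\w) = \cf[\k]^\w$. Also by Theorem~\ref{thm:KFams,Yo}, the hypothesis ``$0^\dagger$ does not exist'' implies $\mathsf{SSH}$, which is precisely the statement that $\cf[\k]^\w$ equals $\k$ when $\cf(\k) > \w$ and equals $\k^+$ when $\cf(\k) = \w$. Substituting this into $\paar(\k^\w) = \cf[\k]^\w$ gives the displayed formula.

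There is essentially no obstacle here: the corollary is a direct bookkeeping consequence of Theorem~\ref{thm:par} together with Theorems~\ref{thm:smallKFams} and \ref{thm:KFams,Yo}. The only point requiring a moment's care is the elementary evaluation $\cf[\w_n]^\w = \aleph_n$ for finite $n > 0$ in the first assertion, which was not stated as a named result earlier; but it is immediate from the two cardinality bounds on $\cf[\k]^\w$ discussed in Section~\ref{sec:Kurepa}. One can write the whole proof in a few lines.
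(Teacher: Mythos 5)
Your overall route is exactly the paper's: existence of cofinal Kurepa families (Theorem~\ref{thm:smallKFams} for $\k<\aleph_\w$, Theorem~\ref{thm:KFams,Yo}/Corollary~\ref{cor:KFams} under $\neg 0^\dagger$) fed into Theorem~\ref{thm:par}, plus $\mathsf{SSH}$ for the displayed case split. The second assertion is handled correctly and completely.

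There is, however, a genuine error in your justification of the upper bound $\cf[\w_n]^\w \leq \aleph_n$ in the first assertion. You argue that a cofinal Kurepa family in $[\w_n]^\w$ has size at most $\aleph_n$ because it is a subset of $[\w_n]^\w$, ``which has cardinality $\aleph_n$.'' That cardinality claim is false: $\card{[\w_n]^\w} = \aleph_n^{\aleph_0} \geq 2^{\aleph_0}$, which exceeds $\aleph_n$ whenever $\continuum > \aleph_n$. So the parenthetical gives no bound at all, and Theorem~\ref{thm:smallKFams} as stated asserts only existence, not a size bound. The fact you need, $\cf[\w_n]^\w = \aleph_n$ for $0<n<\w$, is true and standard, but it requires an actual argument -- for instance, induction on $n$: every countable subset of $\w_n$ is bounded below some $\a<\w_n$ (since $\cf(\w_n)>\w$), and $[\a]^\w$ has a cofinal family of size at most $\aleph_{n-1}$ by the inductive hypothesis applied to $\card{\a}\leq\aleph_{n-1}$, so the union of such families over all $\a<\w_n$ is cofinal in $[\w_n]^\w$ and has size at most $\aleph_n\cdot\aleph_{n-1}=\aleph_n$. (The paper's own proof is equally terse on this point and implicitly relies on the same standard computation, but your attempt to supply the missing step replaces it with an incorrect one.) With that repair, the rest of your argument -- the lower bound $\cf[\w_n]^\w\geq\aleph_n$ from Section~\ref{sec:Kurepa} and the appeal to Theorem~\ref{thm:par} -- goes through.
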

\begin{proof}
This follows from Corollary~\ref{cor:KFams} and Theorems~\ref{thm:smallKFams} and \ref{thm:par}.
\end{proof}

\begin{theorem}\label{thm:main2}
Assume $0^\dagger$ does not exist, and let $\k$ be a singular cardinal with $\cf(\k) = \w$. If $\k \in \mathfrak{sp}(\text{\emph{\small Borel}})$, then $\k^+ \in \mathfrak{sp}(\text{\emph{\small Borel}})$ also.
\end{theorem}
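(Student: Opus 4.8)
The plan is to chain together the results already established in this section, in direct analogy with the way Theorem~\ref{thm:main} was deduced from Theorem~\ref{thm:K} and Corollary~\ref{cor:KFams}. In other words, I would argue that Theorem~\ref{thm:main2} is an immediate consequence of Theorem~\ref{thm:nearlythere}, Theorem~\ref{thm:par}, and the facts about $\cf[\k]^\w$ recorded in Theorem~\ref{thm:KFams,Yo} and Corollary~\ref{cor:KFams}.

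First I would invoke the hypotheses on $\k$: since $\k$ is singular with $\cf(\k) = \w$ and $0^\dagger$ does not exist, Corollary~\ref{cor:KFams} supplies a cofinal Kurepa family on $\k$, and Theorem~\ref{thm:KFams,Yo} tells us that $\mathsf{SSH}$ holds, so $\cf[\k]^\w = \k^+$. (Both facts are already packaged together in Corollary~\ref{cor:ParAlephOmega}, which in this case reads $\paar(\k^\w) = \cf[\k]^\w = \k^+$; so one could simply quote that corollary instead.) Applying Theorem~\ref{thm:par} to the cofinal Kurepa family on $\k$ then yields $\paar(\k^\w) = \cf[\k]^\w = \k^+$.

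Next, since $\k \in \bspec$ by hypothesis and $\k$ is uncountable, Theorem~\ref{thm:nearlythere} gives $\paar(\k^\w) \in \bspec$. Combining this with the previous paragraph, $\k^+ = \paar(\k^\w) \in \bspec$, which is exactly the desired conclusion.

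I do not expect any genuine obstacle at this stage: all the substance has been front-loaded into the earlier results. If one wants to point to where the real content lies, it is in Theorem~\ref{thm:par} — specifically, in using the Kurepa property of $\K$ to rewrite each piece $Y_A = A^\w \setminus \bigcup\{B^\w : B \prec A\}$ as a difference $A^\w \setminus \bigcup\{B^\w : B \in \G\}$ over a countable subfamily $\G \sub \K$, so that $Y_A$ is a $G_\dlt$ (hence Polish) subspace of $\k^\w$, together with the use of cofinality of $\K$ in $[\k]^\w$ to guarantee that $|\K| = \cf[\k]^\w$ many of the $Y_A$ are nonempty. With Theorem~\ref{thm:par} in hand, nothing remains but to string the implications together.
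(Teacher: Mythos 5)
Your proposal is correct and follows exactly the paper's route: the paper deduces Theorem~\ref{thm:main2} immediately from Theorem~\ref{thm:nearlythere} together with Corollary~\ref{cor:ParAlephOmega} (which packages Theorem~\ref{thm:par} and Corollary~\ref{cor:KFams} just as you describe). Nothing is missing.
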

\begin{proof}
This follows immediately from Theorem~\ref{thm:nearlythere} and Corollary~\ref{cor:ParAlephOmega}.
\end{proof}

This completes the second proof of our main theorem.
To end this section, we point out how the results from this section relate to what is sometimes called the \emph{Banach problem}, posed by Stefan Banach in the Scottish Book:
\begin{itemize}
\item[$\!$] When does a metric space admit a continuous bijective mapping onto a compact metric space?
\end{itemize}

\begin{theorem}\label{thm:Banach}
Let $\k$ be an uncountable cardinal with $\paar(\k^\w) = \k$. Then the following are equivalent: 
\begin{enumerate}
\item $\k \in \mathfrak{sp}(\text{\emph{\small Borel}})$.
\item There is a continuous bijective mapping from $\k^\w$ onto $\R$.
\item There is a continuous bijective mapping from $\k^\w$ onto a compact metric space.
\end{enumerate}
\end{theorem}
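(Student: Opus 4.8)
The plan is to treat statement $(1)$ as a hub and prove the four implications $(1)\Rightarrow(2)$, $(2)\Rightarrow(1)$, $(1)\Rightarrow(3)$, and $(3)\Rightarrow(1)$; all remaining implications among the three statements then follow by transitivity. The hypothesis $\paar(\k^\w)=\k$ enters in precisely one way: it is what converts a partition of some Polish space into $\paar(\k^\w)$ pieces into the assertion $\k\in\bspec$. Note also that $\k$, and hence $\paar(\k^\w)$, is uncountable, so Lemma~\ref{lem:bspec} applies to the partitions we produce.

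Three of the four implications are short. $(1)\Rightarrow(2)$ is immediate from Lemma~\ref{lem:steppingup}, since $\k\in\bspec$ means $\R$ is partitioned into $\k$ Borel sets. For $(2)\Rightarrow(1)$: a continuous bijection $\k^\w\to\R$ yields, via Lemma~\ref{lem:pushforward}, a partition of $\R$ into $\paar(\k^\w)=\k$ Borel sets, so $\k\in\bspec$. For $(3)\Rightarrow(1)$: suppose $f\colon\k^\w\to K$ is a continuous bijection with $K$ compact metric; then $K$ is an uncountable Polish space, being the bijective image of the uncountable space $\k^\w$. Fix a partition of $\k^\w$ into $\paar(\k^\w)$ Polish subspaces. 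For each piece $P$, the map $f\rest P$ is a continuous injection of one Polish space into another, so $f[P]$ is Borel in $K$ by the Lusin--Suslin theorem \cite[Theorem 15.1]{Kechris}. Thus $K$ is partitioned into $\paar(\k^\w)=\k$ Borel sets, and Lemma~\ref{lem:bspec} gives $\k\in\bspec$. (Only that $K$ is Polish is used here, not that it is compact.)

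The remaining implication $(1)\Rightarrow(3)$ is the one that needs a little care, and I expect it to be the main obstacle. The tempting move---compose the continuous bijection $\k^\w\to\R$ from Lemma~\ref{lem:steppingup} with a continuous bijection of $\R$ onto a compact metric space---does not obviously work, since it is unclear whether $\R$ itself admits such a bijection. Instead I would use the fact, visible inside the \emph{proof} of Lemma~\ref{lem:steppingup}, that $\k\in\bspec$ produces a continuous bijection $\k^\w\to\w^\w$ (the statement of that lemma merely records the further composition with a continuous bijection $\w^\w\to\R$). Composing $\k^\w\to\w^\w$ with a continuous bijection $\w^\w\to 2^\w$---which exists because $2^\w$ is a Polish space with no isolated points \cite[Exercise 7.15]{Kechris}---gives a continuous bijection from $\k^\w$ onto the compact metric space $2^\w$. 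This is the whole of $(1)\Rightarrow(3)$, and together with the previous paragraph it completes the proof.
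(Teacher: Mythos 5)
Your proof is correct, and it uses the same ingredients as the paper (Lemmas~\ref{lem:pushforward} and \ref{lem:steppingup}, Lusin--Suslin, and Lemma~\ref{lem:bspec}); the only real divergence is organizational and in the step producing a compact image. The paper proves the cycle $(1)\Rightarrow(2)\Rightarrow(3)\Rightarrow(1)$, getting $(2)\Rightarrow(3)$ by exactly the ``tempting move'' you set aside: there \emph{is} a continuous bijection from $\R$ onto a compact metric space, namely onto a wedge of two circles (send $(-\infty,0]$ around one circle and $[0,\infty)$ around the other, each ray wrapping once and limiting onto the wedge point without reaching it again). So your worry on that point is unfounded. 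That said, your detour---extracting the continuous bijection $\k^\w\to\w^\w$ from inside the proof of Lemma~\ref{lem:steppingup} and composing with $\w^\w\to 2^\w$ via \cite[Exercise 7.15]{Kechris}---is perfectly valid and arguably cleaner, since it avoids having to exhibit a concrete condensation of $\R$ onto a compactum. Your hub-and-spoke arrangement also makes you prove four implications where three suffice, but the extra one ($(2)\Rightarrow(1)$ directly from Lemma~\ref{lem:pushforward}) is a one-liner, and your $(3)\Rightarrow(1)$ correctly spells out what the paper leaves implicit: that replacing $\R$ by $K$ in the proof of Lemma~\ref{lem:pushforward} yields a partition of the uncountable Polish space $K$ into $\paar(\k^\w)=\k$ Borel sets, whence Lemma~\ref{lem:bspec} finishes.
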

\begin{proof}
$(1) \Rightarrow (2)$ was proved in Lemma~\ref{lem:steppingup}. $(2) \Rightarrow (3)$ because there is a continuous bijection from $\R$ to a compact metric space: for example, it is easy to describe a continuous bijection from $\R$ to a wedge sum of two circles (an ``8'').
Finally, $(3) \Rightarrow (1)$ by the proof of Lemma~\ref{lem:pushforward}, as the role of $\R$ in that proof can be played equally well by any compact metric space.
\end{proof}

By Theorem~\ref{thm:par}, the hypothesis of this theorem is satisfied by $\k = \aleph_n$ for any $n < \w$; or if $0^\dagger$ does not exist, it is satisfied by any cardinal $\k$ with $\cf(\k) > \w$.

A special case of this theorem (for $\k < \aleph_\w$) was proved already in \cite{Osipov} and \cite{Brian&Miller}. In \cite[Theorem 2.1]{Osipov}, it is shown that the three equivalent conditions of Theorem~\ref{thm:Banach} also imply that if $X$ is any Banach space of weight $\k$, then there is a continuous bijection from $X$ to the Hilbert cube $[0,1]^\w$.

The implication $(1) \Rightarrow (2)$ in Theorem~\ref{thm:Banach} holds for any $\k$. But the results of this section tell us nothing about whether the converse may hold for singular cardinals of countable cofinality. (It is proved in \cite[Section 2]{BCP} that if $\cf(\k) = \w$ then $\paar(\k^\w) \geq \cf[\k]^\w \geq \k^+$.)

\begin{question}
Suppose $\lambda$ is a singular cardinal with $\cf(\lambda) = \w$, and there is a continuous bijection $\lambda^\w \to \R$. Does this imply $\lambda \in \mathfrak{sp}(\text{\emph{\small Borel}})$?
\end{question}

\begin{question}
In particular, is it consistent that there is a continuous bijection $\w_\w^\w \to \R$, but that $\aleph_\w \notin \mathfrak{sp}(\text{\emph{\small Borel}})$?
\end{question}

\section{Other partition spectra}\label{sec:spectra}

Given a pointclass $\Gamma$ of sets, define
$$\specg = \set{|\P|}{\P \text{ is a partition of } \R \text{ into uncountably many sets in }\Gamma }\!.$$
Many of the known results concerning $\bspec$ listed in the introduction apply equally well to other pointclasses of sets. 
For example, both $\Spec$ and $\specod$ satisfy the analogues of statements $(2)$-$(6)$ in the introduction \cite{BBP,Blass}.
They also both satisfy the conclusion of the main result of the present paper. For $\specod$ this is relatively easy to see: the proof given in Section 3 generalizes readily to any pointclass $\Gamma$ that is closed under taking countable intersections and relative complements.

\begin{theorem}\label{thm:general}
Suppose $\Gamma$ is a pointclass closed under taking countable intersections and relative complements. Assuming $0^\dagger$ does not exist, if $\k$ is a singular cardinal with $\cf(\k) = \w$ and $\k \in \specg$, then $\k^+ \in \specg$.
\end{theorem}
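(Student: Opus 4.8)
The plan is to transcribe the proof of Theorem~\ref{thm:K} almost verbatim, replacing ``Borel'' throughout by ``$\Gamma$'', and then to check that every closure operation it invokes is either one of the two hypothesized for $\Gamma$ or a property shared by all pointclasses. First I would record that the set-theoretic input is unchanged: since $\cf(\k) = \w$ and $0^\dagger$ does not exist, Theorem~\ref{thm:KFams,Yo} and Corollary~\ref{cor:KFams} give $\cf[\k]^\w = \k^+$ together with, for every set $A$ of size $\k$, a cofinal Kurepa family in $[A]^\w$ of cardinality $\k^+$. So it suffices to establish the $\Gamma$-analogue of Theorem~\ref{thm:K}: if there is a cofinal Kurepa family in $[\k]^\w$ and $\k \in \specg$, then $\cf[\k]^\w \in \specg$.

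For that, I would run the argument of Theorem~\ref{thm:K} with no structural change. Starting from a partition $\P$ of $\R$ into $\k$ sets in $\Gamma$, fix a cofinal Kurepa family $\K$ on $\P$ (it exists, since existence depends only on $|\P| = \k$), fix a well-ordering $\prec$ of $\K$, and set $X_\A = (\bigcup\A)^\w \subseteq \R^\w$ and $Y_\A = X_\A \setminus \bigcup\set{X_\B}{\B \prec \A}$. The verification that $\mathcal Q = \set{Y_\A}{\A\in\K}\setminus\{\0\}$ is a partition of $\R^\w$ of cardinality $|\K| = \cf[\k]^\w$ is purely combinatorial — it is the argument with the sets $\mathcal Z_\A$ — and does not mention the pointclass at all, so it carries over unchanged.

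The only pointclass content is the two assertions ``$X_\A \in \Gamma(\R^\w)$'' and ``$Y_\A \in \Gamma(\R^\w)$''. For $Y_\A$: the Kurepa property of $\K$ yields a countable $\G \subseteq \K$ with $\set{\A\cap\B}{\B \prec \A} = \set{\A\cap\B}{\B\in\G}$, and then, exactly as in the displayed computation in the proof of Theorem~\ref{thm:K}, $Y_\A = \bigcap_{\B\in\G}\big(X_\A \setminus X_\B\big)$, so $Y_\A \in \Gamma$ by closure under relative complements and countable intersections — precisely the hypothesis. For $X_\A$ one needs a little more: $\bigcup\A$ is a countable union of sets in $\Gamma(\R)$, and $X_\A = \bigcap_n \pi_n^{-1}[\bigcup\A]$, where $\pi_n\colon \R^\w\to\R$ is the $n$-th projection. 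Closure under countable unions follows from the stated hypotheses once one recalls that a pointclass contains the ambient space, so that relative complementation yields complementation and hence, with countable intersections, a $\sigma$-algebra structure; and closure under continuous preimages of the $\pi_n$ is part of being a pointclass. Thus $X_\A \in \Gamma(\R^\w)$. Finally, one transfers $\mathcal Q$ from $\R^\w$ back to $\R$ exactly as in Lemma~\ref{lem:bspec}, by composing with a Borel isomorphism $\R^\w \to \R$, using that $\Gamma$ is invariant under Borel isomorphism; for OD($\R$) this is immediate, since the image of an OD($\R$) set under a Borel bijection is defined from the same ordinal and real parameters together with a real coding the bijection.

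The step requiring real care, rather than transcription, is this last transfer: one must know that $\specg$ does not depend on which uncountable Polish space is used, i.e.\ that $\Gamma$ is invariant under Borel isomorphisms. This is not literally one of the two stated closure properties, but it holds for every pointclass one would actually plug in — Borel, $\Sigma^1_n$, $\Pi^1_n$, projective, OD($\R$) — so Theorem~\ref{thm:general} applies to each of them; and for OD($\R$) it is trivial, which is why the conclusion for $\specod$ is singled out as ``relatively easy''. Everything else is a direct copy of Section~\ref{sec:main}.
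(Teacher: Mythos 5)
Your proposal matches the paper's intended proof exactly: the paper gives no separate argument for Theorem~\ref{thm:general}, stating only that the proof of Theorem~\ref{thm:K} in Section~\ref{sec:main} ``generalizes readily'' to such a pointclass, which is precisely the transcription you carry out. Your two caveats --- that $X_\A \in \Gamma$ needs closure under countable unions and continuous preimages beyond the two stated hypotheses, and that the transfer between $\R$ and $\R^\w$ needs $\Gamma$ to respect Borel isomorphisms --- are correct and are implicitly assumed (and satisfied by every pointclass the paper applies the theorem to), so they sharpen rather than contradict the paper's claim.
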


We now show that the conclusion of this theorem applies to several familiar pointclasses that are not closed under taking countable intersections and relative complements.

\begin{theorem}
$\mathfrak{sp}(\text{\emph{\small Borel}}) = \mathfrak{sp}(\text{\emph{\small $\mathbf{\Sigma}^1_1$}}) = \mathfrak{sp}(\text{\emph{\small $\mathbf{\Pi}^1_1$}}) = \mathfrak{sp}(\text{\emph{\small $\mathbf{\Sigma}^1_2$}}) = \mathfrak{sp}(\text{\emph{\small $\aleph_1$-Suslin}})$.
\end{theorem}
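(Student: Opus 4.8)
The plan is to deduce the entire chain of equalities from the single substantive inclusion $\mathfrak{sp}(\text{\small $\aleph_1$-Suslin}) \subseteq \bspec$; every other inclusion is a trivial containment of pointclasses. Every Borel set is $\mathbf{\Delta}^1_1$, hence lies in $\mathbf{\Sigma}^1_1$, in $\mathbf{\Pi}^1_1$, and in $\mathbf{\Sigma}^1_2$, and every $\mathbf{\Sigma}^1_2$ set is $\aleph_1$-Suslin via Shoenfield's tree. Thus a partition of $\R$ into $\k$ Borel sets is at the same time a partition into $\k$ sets of each of these four kinds, which gives $\bspec \subseteq \mathfrak{sp}(\mathbf{\Sigma}^1_1) \subseteq \mathfrak{sp}(\mathbf{\Sigma}^1_2) \subseteq \mathfrak{sp}(\text{\small $\aleph_1$-Suslin})$ together with $\bspec \subseteq \mathfrak{sp}(\mathbf{\Pi}^1_1) \subseteq \mathfrak{sp}(\mathbf{\Sigma}^1_2)$. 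Hence, once $\mathfrak{sp}(\text{\small $\aleph_1$-Suslin}) \subseteq \bspec$ is established, all five spectra collapse onto $\bspec$.

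The heart of the matter is the classical fact that the $\aleph_1$-Suslin subsets of a Polish space are exactly the unions of at most $\aleph_1$ Borel sets; I would prove the nontrivial direction in two steps (the reverse inclusion is immediate, since Borel sets are analytic, i.e.\ $\w$-Suslin, and an $\aleph_1$-indexed union of $\w$-Suslin sets is $\aleph_1$-Suslin, using $|\w_1 \times \w| = \aleph_1$). Step one: if $A = p[T]$ for a tree $T$ on $\w \times \w_1$, then the range of the second coordinate of any branch of $T$ is countable, so $A = \bigcup_{\xi < \w_1} p\big[T \rest (\w \times \xi)\big]$ is a union of $\aleph_1$ analytic sets; in particular the class of $\aleph_1$-Suslin sets is invariant under Borel isomorphisms (as the class of analytic sets is), so exactly as in Lemma~\ref{lem:bspec} the spectrum $\mathfrak{sp}(\text{\small $\aleph_1$-Suslin})$ is the same whichever uncountable Polish space is used, and we may work in $\w^\w$. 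Step two: every analytic $B \subseteq \w^\w$ is a union of $\aleph_1$ Borel sets — writing $B = p[S]$ with $S$ a tree recursive in a real parameter, $x \in B$ iff the tree $S_x$ is ill-founded, iff (by Mostowski absoluteness) $S_x$ is ill-founded in $L[x]$, iff some branch of $S_x$ belongs to $L_\xi[x]$ for some $\xi < \w_1$; and since for each fixed countable $\xi$ the map sending $x$ to a real coding $L_\xi[x]$ is Borel, the set $B_\xi := \{x : \text{some real of } L_\xi[x] \text{ is a branch of } S_x\}$ is Borel, so $B = \bigcup_{\xi < \w_1} B_\xi$. Composing the two steps writes $A$ as a union of $\aleph_1 \cdot \aleph_1 = \aleph_1$ Borel sets.

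Granting this, $\mathfrak{sp}(\text{\small $\aleph_1$-Suslin}) \subseteq \bspec$ follows by a disjointification argument of exactly the shape already used in Theorems~\ref{thm:K} and \ref{thm:par}. Let $\k > \aleph_0$ and let $\P$ be a partition of $\R$ into $\k$ many $\aleph_1$-Suslin sets. For each $P \in \P$ write $P = \bigcup_{\xi < \w_1} B^P_\xi$ with the $B^P_\xi$ Borel, and, replacing $B^P_\xi$ by $\bigcup_{\eta \le \xi} B^P_\eta$, arrange that this union is increasing in $\xi$. Then $P$ is partitioned by the sets $B^P_\xi \setminus \bigcup_{\eta < \xi} B^P_\eta$, each of which is Borel, being the difference of a Borel set and a countable union of Borel sets. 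Carrying this out inside every $P \in \P$ yields a partition $\mathcal Q$ of $\R$ into Borel sets that refines $\P$, so $|\mathcal Q| \ge \k$; and $|\mathcal Q| \le \k \cdot \aleph_1 = \k$ because $\k$ is uncountable. Hence $|\mathcal Q| = \k$, and $\k \in \bspec$.

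I expect the main obstacle to be the second paragraph — specifically step two, that analytic sets are unions of $\aleph_1$ Borel sets. This is a classical result (it goes back to Sierpi\'nski, and also follows from the Mansfield--Solovay analysis of $\mathbf{\Sigma}^1_2$ sets), but unlike the rest of the argument it relies on a genuine absoluteness input and on the (standard but slightly technical) fact that the initial segments $L_\xi[x]$ of the constructible hierarchy over a real can be coded Borel-uniformly at each fixed countable stage. The pointclass inclusions and the disjointification are routine bookkeeping of the kind already carried out in Sections~\ref{sec:main} and \ref{sec:encore}.
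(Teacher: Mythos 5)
Your proposal is correct and follows essentially the same route as the paper: the paper also sandwiches all four pointclasses via the fact that each of their members is a union of $\aleph_1$ Borel sets (citing \cite[Chapter 13]{Kanamori}) and then refines an uncountable partition without changing its cardinality. The only difference is that you reduce everything to the $\aleph_1$-Suslin case by monotonicity of $\specg$ and supply a proof of the classical Luzin--Sierpi\'nski/Shoenfield decomposition that the paper simply cites; both of these steps are sound.
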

\begin{proof}
Let $\Gamma$ be any of the latter four pointclasses in the statement of the theorem. 
Every set in $\Gamma$ can be be written as a union of $\aleph_1$ Borel sets (see \cite[Chapter 13]{Kanamori}).
This implies every set in $\Gamma$ can be partitioned into $\leq\!\aleph_1$ Borel sets.
Thus any partition $\P$ of $\R$ into sets in $\Gamma$ can be refined to a partition of $\R$ into Borel sets, by replacing each set in $\P$ with $\leq\!\aleph_1$ Borel sets; and if $\P$ is uncountable, any such refinement has size $|\P|$. 
Thus $\specg \sub \bspec$. The reverse inclusion is immediate: every Borel set is in $\Gamma$, so a partition of $\R$ into Borel sets is already a partition of $\R$ into sets in $\Gamma$.
\end{proof}

From this and Theorem~\ref{thm:main} it follows that the conclusion of Theorem~\ref{thm:general} applies when $\Gamma \in \{ \text{analytic, coanalytic, $\mathbf{\Sigma}^1_2$, $\aleph_1$-Suslin}\}$.

Next we show that the same is true for $\Gamma = {}$closed.
It is known that, consistently, $\Spec \neq \bspec$. This result can be attributed to Sierpi\'nski \cite{Sierpinski}, who showed (in our terminology) that $\min \!\left( \vphantom{f^2} \Spec \right) \geq \mathbf{cov}(\mathcal M)$, which means we need not have $\aleph_1 \in \Spec$. Nonetheless, we show here that $\Spec$ is a final segment of $\bspec$, which is enough to guarantee that Theorem~\ref{thm:general} holds for $\Gamma={}$closed.

Recall that the cardinal invariant $\mad_T$ denotes the least size of a partition of the Baire space $\w^\w$ into compact sets. (The notation ``$\mad_T$'' derives from the fact that $\mad_T$ is equal to the least size of a maximal infinite family of almost disjoint subtrees of $2^{<\w}$.)
It is known that $\dom \leq \mad_T$, and that this inequality is consistently strict \cite{Spinas}.

\begin{lemma}\label{lem:Spec1}
For any uncountable Polish space $X$,
\begin{align*}
\mathfrak{sp}(\text{\emph{\small closed}}) &= \set{\k > \aleph_0}{\text{there is a partition of } X \text{ into }\k \text{ compact sets}} \\ 
 &= \set{\k > \aleph_0}{\text{there is a partition of } X \text{ into }\k \text{ closed sets}} \\
  &= \set{\k > \aleph_0}{\text{there is a partition of } X \text{ into }\k \ F_\s \text{ sets}}.
\end{align*}
\end{lemma}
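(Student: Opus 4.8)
The plan is to establish the stated four-way equality by running a cycle of inclusions through the four classes of cardinals. Two links are free: every compact set is closed, so a partition of $X$ into $\k$ compact sets is already a partition into $\k$ closed sets; and every closed set is $F_\s$, so a partition into $\k$ closed sets is already a partition into $\k$ many $F_\s$ sets. Hence it suffices to prove, for an arbitrary uncountable Polish space $X$, the two inclusions
\[
\{\k > \aleph_0 : X \text{ is a disjoint union of } \k \text{ many } F_\s \text{ sets}\} \ \sub\ \Spec \ \sub\ \{\k > \aleph_0 : X \text{ is a disjoint union of } \k \text{ compact sets}\}.
\]
Applying this with $X = \R$ (where $\Spec$ \emph{is} the ``closed'' class, by definition) collapses all four classes to $\Spec$; applying it with an arbitrary $X$ then shows the ``compact'', ``closed'', and ``$F_\s$'' classes for $X$ all equal $\Spec$. (In particular, the instance $X = \w^\w$ identifies $\min \Spec$ with $\mad_T$, which is why $\mad_T$ is recalled just before the lemma.)

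The tools I would use are: (a) every uncountable Polish space is a continuous bijective image of a closed subspace of $\w^\w$, and of $\w^\w$ itself when it has no isolated points (\cite[Theorem 13.7 and Exercise 7.15]{Kechris}); (b) in $\w^\w$ and in $2^\w$, every open set is a countable disjoint union of clopen sets (for each point of the open set take the shortest initial segment whose basic open neighborhood lies inside the set; these segments form an antichain), so that every $F_\s$ subset of $\w^\w$ is a countable disjoint union of closed sets, and every $\s$-compact closed subset of $\w^\w$ is a countable disjoint union of compact sets; (c) a continuous bijection $f\colon Y \to Z$ carries a partition of $Z$ into closed (or $F_\s$) sets back to a partition of $Y$ of the same cardinality via preimages, and carries a partition of $Y$ into compact sets forward to one of $Z$ via images; (d) the Lusin--Suslin theorem (\cite[Theorem 15.1]{Kechris}) and Hurewicz's dichotomy (a closed subset of $\w^\w$ is either $\s$-compact or contains a closed copy of $\w^\w$), together with the Alexandrov--Urysohn characterization of $\w^\w$ (\cite[Exercise 7.2.G]{Engelking}).

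Now the ``$F_\s \sub \Spec$'' inclusion. Given a partition of $X$ into $\k$ many $F_\s$ sets, pull it back along a continuous bijection from a closed $F \sub \w^\w$ onto $X$; the pieces become $F_\s$ subsets of $F$, hence of $\w^\w$. Slicing the open set $\w^\w \setminus F$ into countably many clopen (hence $F_\s$) pieces enlarges this to a partition of $\w^\w$ into $\k$ many $F_\s$ sets; by (b), refining each piece into countably many closed sets yields a partition of $\w^\w$ into $\k$ closed sets. The crux (below) is to refine this further into a partition of $\w^\w$ into $\k$ \emph{compact} sets; granting that, push the result forward along a continuous bijection $\w^\w \to \R$ --- images of compact sets are compact, hence closed --- to get a partition of $\R$ into $\k$ closed sets, i.e.\ $\k \in \Spec$. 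The ``$\Spec \sub \text{compact}(X)$'' inclusion is dual: pull a partition of $\R$ into $\k$ closed sets back along $\w^\w \to \R$ to a closed partition of $\w^\w$, refine it to a compact partition of $\w^\w$ (same crux), push that forward along a continuous bijection from $\w^\w$ onto the perfect kernel $X_0$ of $X$, and adjoin one singleton for each of the at most countably many points of $X \setminus X_0$.

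The main obstacle is the refinement of a partition of $\w^\w$ into $\k$ closed sets ($\k$ uncountable) to a partition into $\k$ compact sets. A closed piece that happens to be $\s$-compact is handled by (b). For a closed piece $C$ that is not $\s$-compact, I would peel off its ``$\s$-compact core'' --- the union of all relatively open subsets of $C$ with compact closure, which is a countable union of open subsets of compact sets and hence $\s$-compact --- and iterate a countable-ordinal, Cantor--Bendixson-style derivative until the residue is a closed set with no points of local compactness, which by the Alexandrov--Urysohn characterization is empty or homeomorphic to $\w^\w$ (Hurewicz's dichotomy guaranteeing non-triviality along the way). One is then left to partition the $\s$-compact scraps and the (at most $\aleph_1$-many) residual copies of $\w^\w$ into compact sets, and the genuinely delicate point is the cardinal bookkeeping: checking that $\k$ compact pieces suffice for the whole of $\w^\w$ --- equivalently, that no closed partition of $\w^\w$ of uncountable size $\k$ can have ``too many'' non-$\s$-compact pieces relative to $\k$. (An alternative I would keep in reserve is to run the whole argument with $2^\w$ in place of $\w^\w$, where ``closed'' and ``compact'' coincide outright; the cost is that partitions must then be transferred \emph{into} the compact space $2^\w$ rather than pulled out of $\w^\w$, which requires more care with the continuous bijections.)
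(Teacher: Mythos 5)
The paper itself does not prove this lemma; it is a one-line citation to \cite[Corollary 2.5]{BBP}, so there is no internal proof to match your argument against. Your transfer machinery --- the cycle of inclusions, pulling an $F_\s$ partition back along a continuous bijection from a closed subset of $\w^\w$, disjointifying open and $F_\s$ sets in a zero-dimensional space, and pushing compact partitions forward --- is correct and is essentially how the reduction is organized in \cite{BBP}. The problem is that you have reduced the lemma to its actual content and then left that content unproved. The crux you isolate --- turning a partition of $\w^\w$ into $\k$ closed sets into a partition into exactly $\k$ compact sets --- is not ``cardinal bookkeeping''; it is the theorem. Observe what Lemma~\ref{lem:Spec} extracts from Lemma~\ref{lem:Spec1}: every uncountable $\k \in \Spec$ satisfies $\k \geq \mad_T$. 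So any proof of Lemma~\ref{lem:Spec1} must in particular show that no uncountable Polish space can be partitioned into $\k$ closed sets when $\aleph_0 < \k < \mad_T$. Since $\mathbf{cov}(\mathcal M) \leq \dom \leq \mad_T$ and the first inequality is consistently strict, this is strictly stronger than Sierpi\'nski's lower bound quoted in Section~\ref{sec:spectra}, and it is a genuine theorem with its own proof, not a refinement exercise.

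Your proposed mechanism cannot deliver it. The per-piece Cantor--Bendixson-style peeling together with the Hurewicz dichotomy shows that each closed piece decomposes into countably many compact sets plus at most one residual closed copy of $\w^\w$; partitioning each such residue costs $\mad_T$ compact sets, so the refinement has size $\max(\k,\mad_T)$ as soon as even one piece fails to be $\s$-compact --- the wrong cardinal precisely when $\k < \mad_T$. The obstruction is also not ``too many non-$\s$-compact pieces'': one such piece already forces $\mad_T$ subpieces, whereas if every piece is $\s$-compact you get a compact partition of size $\k\cdot\aleph_0=\k$ outright and hence $\k\ge\mad_T$ automatically. Either way the missing step is the global lower bound $\k \geq \mad_T$ for uncountable closed partitions, which needs an argument about how the pieces interact across the whole space (in the Baire-category style of Sierpi\'nski, Miller \cite{Miller}, and \cite{Brian&Miller}), not a decomposition of the pieces one at a time. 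Your fallback of working in $2^\w$ hits the dual problem you already sense: a continuous injection of $2^\w$ into $X$ may land inside a single piece, so the pulled-back compact partition can have too few pieces. As written, the proposal establishes the routine transfers and leaves the essential content of the lemma open.
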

\begin{proof}
See \cite[Corollary 2.5]{BBP}.
\end{proof}

\begin{lemma}\label{lem:Spec}
$\mathfrak{sp}(\text{\emph{\small compact}}) = \mathfrak{sp}(\text{\emph{\small closed}}) = \mathfrak{sp}(\text{\emph{\small $F_\s$}}) = [\mad_T,\continuum] \cap \mathfrak{sp}(\text{\emph{\small Borel}})$.
\end{lemma}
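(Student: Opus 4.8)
The plan is as follows. The first three equalities come for free from Lemma~\ref{lem:Spec1} applied with $X = \R$: that lemma already identifies $\mathfrak{sp}(\text{\small closed})$ with the set of uncountable cardinals $\k$ admitting a partition of $\R$ into $\k$ compact sets, and likewise with the set admitting a partition of $\R$ into $\k$ $F_\s$ sets. The former is $\mathfrak{sp}(\text{\small compact})$ and the latter is $\mathfrak{sp}(\text{\small $F_\s$})$, so $\mathfrak{sp}(\text{\small compact}) = \mathfrak{sp}(\text{\small closed}) = \mathfrak{sp}(\text{\small $F_\s$})$. It therefore remains to prove $\Spec = [\mad_T,\continuum] \cap \bspec$. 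By Lemma~\ref{lem:bspec} and Lemma~\ref{lem:Spec1} I may work throughout with the Baire space $\w^\w$ in place of $\R$; in particular $\Spec$ is exactly the set of uncountable cardinals $\k$ admitting a partition of $\w^\w$ into $\k$ compact sets, and, since $\mad_T \geq \dom > \aleph_0$, we have $\mad_T = \min \Spec$.

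For the inclusion $\Spec \sub [\mad_T,\continuum] \cap \bspec$ I would simply observe: (i) compact sets are Borel, so $\Spec \sub \bspec$; (ii) a partition of $\R$ into nonempty sets has at most $\continuum$ pieces, so $\max \Spec \leq \continuum$; and (iii) if $\k \in \Spec$ then $\w^\w$ can be partitioned into $\k$ compact sets, so $\k \geq \mad_T$ by the definition of $\mad_T$. Together (i)--(iii) give the inclusion.

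For the reverse inclusion, fix $\k$ with $\mad_T \leq \k \leq \continuum$ and $\k \in \bspec$; the goal is to produce a partition of $\w^\w$ into exactly $\k$ compact sets. Fix a partition $\D$ of $\w^\w$ into $\mad_T$ compact sets, and (using Lemma~\ref{lem:bspec}) a partition $\P = \set{B_\xi}{\xi < \k}$ of $\w^\w$ into $\k$ Borel sets. The crucial step is to show that \emph{every Borel set $B \sub \w^\w$ admits a partition into at most $\mad_T$ compact sets}. Granting this, pick for each $\xi < \k$ a partition $\P_\xi$ of $B_\xi$ into at least one and at most $\mad_T$ compact sets. Then $\bigcup_{\xi < \k} \P_\xi$ is a partition of $\w^\w$ into compact sets (pieces coming from distinct $B_\xi$'s are automatically disjoint); it refines $\P$, so it has at least $\k$ pieces, and it splits each of the $\k$ pieces of $\P$ into at most $\mad_T \leq \k$ pieces, so it has at most $\k \cdot \mad_T = \k$ pieces. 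Hence it has exactly $\k$ pieces, and $\k \in \Spec$, as desired.

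I expect the crucial step to be the main obstacle, and the point is that a Borel subset of $\w^\w$ need not be $\sigma$-compact and its subspace topology need not be Polish, so one cannot simply intersect $B$ with $\D$ (the pieces $B \cap D$ would only be Borel in $D$, not compact). The fix is to route through a closed preimage: by the theorem of Lusin and Suslin \cite[Theorem 13.7]{Kechris} (already used in the proof of Lemma~\ref{lem:steppingup}), there are a closed set $C \sub \w^\w$ and a continuous bijection $f \colon C \to B$. Then $\set{C \cap D}{D \in \D} \setminus \{\0\}$ partitions $C$ into at most $\mad_T$ compact sets (each being a closed subset of a compact set), and pushing this partition forward along $f$ yields a partition of $B$ into at most $\mad_T$ compact sets (each being a continuous image of a compact set). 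This establishes the crucial step and completes the argument.
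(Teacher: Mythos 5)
Your proposal is correct and follows essentially the same route as the paper: Lemma~\ref{lem:Spec1} gives the first three equalities and the lower bound $\mad_T$, and the reverse inclusion is obtained by taking Lusin--Suslin closed preimages of the Borel pieces, partitioning each into at most $\mad_T$ compact sets, pushing forward, and counting $\k \cdot \mad_T = \k$. The only (harmless) difference is that you partition each closed preimage by intersecting it with one fixed partition $\D$ of $\w^\w$ into $\mad_T$ compacta, whereas the paper applies Lemma~\ref{lem:Spec1} directly to each uncountable closed preimage and handles the countable pieces separately by splitting them into singletons.
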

\begin{proof}
By Lemma~\ref{lem:Spec1}, it suffices to show $\mathfrak{sp}(\text{\small compact}) = [\mad_T,\continuum] \cap \bspec$.

By Lemma~\ref{lem:Spec1} and the definition of $\mad_T$, $\mathfrak{sp}(\text{\small compact}) \sub [\mad_T,\continuum]$. And clearly every partition of $\R$ into compact sets is also a partition into Borel sets, so $\mathfrak{sp}(\text{\small compact}) \sub \bspec$. Hence $\mathfrak{sp}(\text{\small compact}) \sub [\mad_T,\continuum] \cap \bspec$.

Now suppose $\k \in [\mad_T,\continuum] \cap \bspec$, and let $\P$ be a partition of $\R$ into $\k$ Borel sets.
By a theorem of Lusin and Souslin \cite[Theorem 13.7]{Kechris}, every Borel subset of a Polish space is the bijective continuous image of a closed subset of $\w^\w$. 
In particular, for every $A \in \P$, there is a continuous bijection $f_A: F_A \to A$, where $F_A$ is a closed subset of $\w^\w$.
Every closed subset of $\w^\w$ is Polish, so for each $A \in \P$ there are two possibilities: (1) $A$ is countable, thus so is $F_A$, or (2) $A$ and $F_A$ are uncountable, and (by Lemma~\ref{lem:Spec1}) there is a partition of $F_A$ into $\mad_T$ compact sets. 
Let $\P_1 = \set{A \in \P}{A \text{ is countable}}$ and $\P_2 = \P \setminus \P_1$. For each $A \in \P_2$, fix a partition $\mathcal Q_A$ of $A$ into $\mad_T$ compact sets. Finally, observe that
$$\set{\{x\}}{x \in A \text{ for some }A \in \P_1} \cup \set{f_A[X]}{A \in \P_2 \text{ and }X \in \mathcal Q_A}$$
is a partition of $\R$ into $\k$ compact sets. 
\end{proof}

\begin{corollary}
Assume $0^\dagger$ does not exist. if $\k$ is a singular cardinal with $\cf(\k) = \w$ and $\k \in \mathfrak{sp}(\text{\emph{\small closed}})$, then $\k^+ \in \mathfrak{sp}(\text{\emph{\small closed}})$.
\end{corollary}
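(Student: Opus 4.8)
The plan is to deduce this corollary from the Borel case by using the structural description of $\Spec$ furnished by Lemma~\ref{lem:Spec}, which gives $\Spec = [\mad_T,\continuum] \cap \bspec$. Thus to conclude $\k^+ \in \Spec$ it is enough to verify two things: that $\k^+ \in \bspec$, and that $\k^+$ still lies in the interval $[\mad_T,\continuum]$.

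First I would unpack the hypothesis. Since $\k \in \Spec$, Lemma~\ref{lem:Spec} immediately gives both $\k \in \bspec$ and $\mad_T \le \k \le \continuum$. Because $0^\dagger$ does not exist and $\k$ is singular with $\cf(\k) = \w$, Theorem~\ref{thm:main} applies to $\k$ and yields $\k^+ \in \bspec$.

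Next I would check the two endpoint conditions for $\k^+$. The lower bound is trivial, since $\mad_T \le \k < \k^+$. For the upper bound, recall from item $(2)$ of the introduction that $\continuum = \max(\bspec)$; having just established $\k^+ \in \bspec$, this forces $\k^+ \le \continuum$. Therefore $\k^+ \in [\mad_T,\continuum] \cap \bspec$, which by Lemma~\ref{lem:Spec} again equals $\Spec$, completing the argument.

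I do not expect a genuine obstacle here; the one point worth flagging is that the bound $\k^+ \le \continuum$ comes for free from the fact that $\bspec$ attains its maximum at $\continuum$, so there is never any danger that the successor step pushes past the continuum. It is also worth noting that, in contrast with Theorem~\ref{thm:general}, this corollary does not follow simply by rerunning the argument of Section~\ref{sec:main} verbatim, since the pointclass of closed sets is not closed under countable intersections and relative complements; the detour through Lemma~\ref{lem:Spec}, which identifies $\Spec$ as a final segment of $\bspec$, is precisely what makes the closed case go through.
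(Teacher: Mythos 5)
Your proposal is correct and is exactly the argument the paper intends: the corollary is stated without proof immediately after Lemma~\ref{lem:Spec}, precisely because combining that lemma's identification $\mathfrak{sp}(\text{\small closed}) = [\mad_T,\continuum] \cap \mathfrak{sp}(\text{\small Borel})$ with Theorem~\ref{thm:main} gives the result in the way you describe. Your endpoint checks ($\mad_T \le \k < \k^+$ and $\k^+ \le \continuum$ since $\continuum = \max(\mathfrak{sp}(\text{\small Borel}))$) are the right details to verify.
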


In addition to $\Spec$, $\bspec$, and $\specod$, which have been studied extensively, one may also find in the literature some interest in $\specgd$.
Most notably, Shelah and Fremlin prove in \cite{ShelahFremlin} that $\min \!\left( \vphantom{f^2} \specgd \right) \geq \mathbf{cov}(\mathcal M$) 
(which implies, in particular, there may be no partition of $\R$ into $\aleph_1$ $G_\dlt$ sets). 
This raises the intriguing possibility that an analogue of Lemma~\ref{lem:Spec} might hold for $\specgd$.

\begin{question}
Is $\mathbf{cov}(\mathcal M) = \min \!\left( \vphantom{f^2} \mathfrak{sp}(\text{\emph{\footnotesize $G_\dlt$}}) \right)$?
\end{question}

\begin{question}\label{q:?}
Is $\mathfrak{sp}(\text{\emph{\footnotesize $G_\dlt$}})$ a final segment of $\bspec$?
\end{question}

\begin{question}
If the answer to Question~\ref{q:?} is negative, is it nonetheless the case that Theorem~\ref{thm:general} holds with $\Gamma = G_\dlt$?
\end{question}



\end{document}